 \let\temp\phi
\let\phi\varphi
\let\varphi\temp
\newcommand{\cl}[1]{\overline{#1}}
\newcommand{\C}{\mathbb{C}}
\newcommand{\N}{\mathbb{N}}
\DeclareMathOperator{\Span}{span}
\newcommand{\calB}{\mathcal{B}}
\newcommand{\calD}{\mathcal{D}}
\newcommand{\calE}{\mathcal{E}}
\newcommand{\calG}{\mathcal{G}}
\newcommand{\calJ}{\mathcal{J}}
\newcommand{\calK}{\mathcal{K}}
\newcommand{\calM}{\mathcal{M}}
\newcommand{\Angle}[1]{\left\langle #1 \right\rangle}
\renewcommand{\Hat}{\widehat}
\DeclareMathOperator{\tr}{tr}
\DeclareMathOperator{\diag}{diag}
\theoremstyle{plain}
\newtheorem{lemma}{Lemma}
\newtheorem{theorem}[lemma]{Theorem}
\newtheorem{proposition}[lemma]{Proposition}
\newtheorem{corollary}[lemma]{Corollary}
\theoremstyle{definition}
\newtheorem{example}[lemma]{Example}
\newtheorem{remark}[lemma]{Remark}
\newtheorem{definition}[lemma]{Definition}
\DeclareMathOperator{\vth}{\vartheta}
\title{Chromatic numbers and a Lov\'{a}sz type inequality for non-commutative graphs}
\author{Se-Jin Kim and Arthur Metha}
\date{}
\begin{document}
\maketitle

\begin{abstract}
Non-commutative graph theory is an operator space generalization of graph theory. Well known graph parameters such as the independence number and Lov\'{a}sz theta function were first generalized to this setting by Duan, Severini, and Winter \cite{DSW}.

We introduce two new generalizations of the chromatic number to non-commutative graphs and provide a generalization of the Lov\'{a}sz sandwich inequality. In particular, we show the chromatic number of the orthogonal complement of a non-commutative graph is bounded below by its theta number. We also provide a generalization of both Sabadussi's Theorem and Hedetniemi's conjecture to non-commutative graphs.
\end{abstract}

\section{Introduction}
Given a graph on $n$ vertices one can associate two different subspaces of the $n \times n$ matrices that encode all of the information of the graph. This has motivated the generalization of several well known graph theoretic concepts to a larger class of objects.

In \cite{DSW}, Duan, Severini, and Winter describe a version of non-commutative graph theory whose underlying objects consist of \emph{submatricial operator systems}. The aforementioned authors generalize the independence number and Lov\'{a}sz theta number to submatricial operator systems.

In \cite{St}, Stahlke works with a similar but distinct definition of a non-commutative graph. Instead of working with submatricial operator systems, Stahlke associates a subspace of matrices whose elements all have zero trace to a graph. Stahlke generalizes several classical graph theory concepts to these traceless subspaces including the chromatic number, clique number and notion of graph homomorphism.

Thus, there are two quite different  subspaces of matrices to associate to graphs that lead to two different ways to create a non-commutative graphs theory. In this paper we discuss both the \emph{submatricial operator system} and \emph{submatricial traceless self-adjoint operator space} definitions of a non-commutative graph.

There is currently no notion of the complement of a non-commutative graph that generalizes the graph complement. By working with both of the above definitions we are able to generalize the complement of a graph using the orthogonal complement with respect to the Hilbert-Schmidt inner product. We conclude this section by reviewing the definition of several non-commutative graph parameters and show that some of these parameters can be approximated by evaluating classical graph parameters.

In \cite{Lov} Lov\'{a}sz introduced his well known theta number of a graph, $\theta(G)$. Lov\'{a}sz shows that this number determines the following bounds on the independence number, $\alpha(G)$, and the chromatic number of the graph complement $\chi(\cl{G})$.
\begin{align*}
\alpha(G) \leq \theta(G) \leq \chi(\cl{G}).
\end{align*}
These two inequalities are often referred to as the Lov\'{a}sz sandwich theorem. In section \ref{Sand section} we establish this result for non-commutative graphs using new generalizations of the chromatic number to the non-commutative setting.

Given two graphs $G$ and $H$ the Cartesian product is the graph $G \Box H$ with vertex set $V(G) \times V(H)$ and edge relation given by $(v,a) \sim (w,b)$ if one of $v \sim_G w$ and $a =b$ or $v = w$ and $a \sim_H b$ holds. A Theorem of Sabidussi tell us $\chi(G \Box H) = \max\{\chi(G), \chi(H)\}$ for any $G$ and $H$. We introduce a Cartesian product and establish a generalization of this result for \emph{submatricial traceless self-adjoint operator spaces} in Section \ref{Sabidussi and Hedet section} . In section \ref{Sabidussi and Hedet section} we also establish a categorical product for submatricial traceless self-adjoint operator space and extend a Theorem of Hedetniemi to submatricial traceless self-adjoint operator spaces.

In \cite{DSW}, it is shown that that the independence number of a submatricial operator system is bounded above by its Lov\'{a}sz number. This provides the first inequality for a generalized Lov\'{a}sz sandwich theorem. We provide a generalization of the second inequality to non-commutative graphs.
We are also able to provide lower and upper bounds on the chromatic number introduced by Stahlke's in \cite{St}. We answer a question posed by Stahlke by generalizing the equation $\chi(G) \omega(\cl{G}) \geq n$ to non-commutative graphs. 

\subsection{Notation}
Let $M_n$ denote the vector space of $n \times n$ matrices over $\C$. This vector space can also be viewed as a Hilbert space using the inner product $\langle A, B \rangle :=\tr(B^*A)$. By a submatricial operator system, we mean a linear subspace $S$ of $M_n$ for which the identity matrix $I$ belongs to $S$ and for which $S$ is closed under the adjoint map $^*$. A submatricial traceless self-adjoint operator space is a linear subspace $\calJ \subset M_n$ for which $\calJ$ is closed under the adjoint operation $^*$ and for which given any $A \in \calJ$, the trace of $A$ is zero.

If $S$ and $T$ are two submatricial operator systems, then a linear map $\phi$ from $S$ into $T$ is called completely positive (cp) if for each positive integer $n$, and for each positive semi-definite matrix $X = [x_{i,j}]_{i,j}$ in $M_n(S)$, the matrix
\begin{align*}
  \phi^{(n)}(X):= \left[\phi(x_{ij}) \right]_{i,j}
\end{align*}
in $M_n(T)$ is positive semi-definite. We say that the cp map $\phi$ is unital and completely positive (ucp) if $\phi$ maps the identity $I$ to the identity $I$. We say that $\phi$ is completely positive and trace preserving (cptp) if $\tr(\phi(X)) = \tr(X)$ for all $X \in S$. For more on cp maps see \cite{PBook}.

We will also be using the following graph theory terminology. A graph $G=(V,E)$ is an ordered pair consisting of a vertex set $V$ and edge set $E \subset V \times V$. Since we are working with undirected graphs we require that if $(i_1, i_2) \in E$ then $(i_2, i_1) \in E$. We say vertices $i_1$ and $i_2$ are \emph{adjacent}, or connected by an edge, and write $i_1 \sim i_2$, whenever $(i_1, i_2) \in E$. An \emph{independent set} of a graph $G$ is a subset $v \subset V$ such that for any two distinct elements $i_1, i_2 \in v$ we have $i_1 \not\sim i_2$. For a graph with $n$ vertices it will be standard to consider the vertex set to be $V= \lbrace 1, \dots, n \rbrace$, which we will denote by $[n]$.

\section{Non-commutative graphs}\label{Non-commutative graphs basics}
A non-commutative graph is sometimes viewed as any submatricial operator system $S$. Non-commutative graphs have also been described as any submatricial traceless  self-adjoint operator space $\calJ$. In this section we review how one can view a classical graph as either of these objects without losing information about the graph itself. We also discuss several parameters for non-commutative graphs.

\subsection{Non-commutative graphs as operator systems}\label{Non-commutative graphs as operator systems}

\begin{definition}
Let $G=(V,E)$ be a graph with vertex set $[n].$
Define $S_G \subset M_n$ by
\begin{align*}
S_G:=\Span \lbrace E_{i,j}:(i,j) \in E \text{ or }i=j \rbrace.
\end{align*}
\end{definition}

Observe that for any graph $G$, $S_G$ will be a submatricial operator system. In \cite{PO}, it is shown that graphs $G$ and $H$ are isomorphic if and only if $S_G$ and $S_H$ are isomorphic in the category of operator systems. We discuss this in more details in \ref{complement sec}.

Given a graph $G$, if vertices $i ,j$ are not adjacent, then $e_ie_j^*=E_{i,j} $ is orthogonal to the submatricial operator system $S_G$. Similarly if $ \lbrace i_1, \dots, i_k \rbrace$ is an independent set of vertices in $G$ then for any $j \neq k$ we have $e_{i_{j}}e^*_{i_{k}} $ is orthogonal to $S_G$. If $v =(v_1,\dots, v_k)$ is an orthonormal collection of vectors in $\C^n$ then $v$ called an \emph{independent set} for a submatricial operator system $S \subset M_n$ if for any $i \neq j$, $v_iv_j^*$ is orthogonal to $S$.

\begin{definition}
Let $S$ be a submatricial operator system. We define the \emph{independence number}, $\alpha(S)$, to be the largest $k \in \N$ such that there exists an independent set for $S$ of size $k$.
\end{definition}

A graph $G=(V,E)$ has a $k$-colouring if and only if there exists a partition of $V$ into $k$ independent sets. In \cite{Lecture} Paulsen defines a natural generalization of the chromatic number to non-commutative graphs. We say a submatricial operator system $S \subset M_n$ has \emph{$k$-colouring} if there exists an orthonormal basis for $\C^n$, $v=(v_1, \dots , v_n)$, such that $v$ can be partitioned into $k$ independent sets for $S$.

\begin{definition}
Let $S \subset M_n$ be a submatricial operator system. The \emph{chromatic number}, $\chi(S)$, is the least $k \in \N$ such that $S$ has a $k$-colouring.
\end{definition}

For any submatricial operator system $S \subset M_n$ we have $\chi(S) \leq n$ since you can partition any basis of $\C^n$ into $n$ independent sets. In Theorem~\ref{thm: insensitive} we show that both of the above parameters provide a generalization of the classical graph theory parameters, that is we show $\alpha(S_G) = \alpha(G)$ and $\chi(S_G)= \chi(G)$.

\begin{example}
Consider the submatricial operator system $S:= \Span \lbrace I, E_{i,j} : i \neq j \rbrace \subset M_n$. Let $u_1 , u_2$ be two orthonormal vectors and let $i$ be an element of the support of $u_1$. Since $u^*_1u_2 =0 $ there must be an element $j \neq i$ of the support of $u_2$. Then  $\langle u_1u_2^*, E_{i,j} \rangle = u_1(i) \overline{u_2(j)} \neq 0 $. Thus we see that $ \alpha(S) =1$. This also tell us that $\chi(S) =n$.
\end{example}
As in \cite{DSW}, given a graph $G$ one can compute the Lov\'{a}sz theta number $\vth(G)$ as,
 \begin{align*}
\vth(G) = \max \lbrace \|I+T\| : I+T \geq 0, \, T_{i,j} = 0 \text{ for } i \sim j \rbrace.
\end{align*}
Here the supremum is taken over all $n \times n$ matrices and  $I+T \geq 0$ indicates that $I+T$ is positive semidefinite.

\begin{theorem}\label{Lsandwich}
Let $G$ be a graph and $\cl{G}$ be the graph complement of $G$. Then,
\begin{align*}
\alpha(G) \leq \vth(G) \leq \chi(\cl{G}).
\end{align*}
\end{theorem}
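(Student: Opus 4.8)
The plan is to prove the two inequalities separately: the left-hand bound is essentially a one-line construction, while the right-hand bound carries the real content. Throughout I read feasibility of $T$ as ``$I+T \succeq 0$, $T$ vanishes on the diagonal, and $T_{ij}=0$ whenever $i \sim j$'', so that a feasible $A := I+T$ is positive semidefinite with unit diagonal and zero entries along the edges of $G$.

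\emph{The bound $\alpha(G) \le \vth(G)$.} Here I would simply exhibit one feasible matrix attaining the value $\alpha(G)$. Let $W \subseteq [n]$ be a maximum independent set, $|W| = \alpha(G)$, and, after reordering coordinates so that $W$ comes first, set $A := J_{|W|} \oplus I_{n-|W|}$, the matrix that is all-ones on the $W \times W$ block and the identity elsewhere, and put $T := A - I$. Then $A = I+T \succeq 0$ as a direct sum of positive semidefinite blocks, every diagonal entry of $A$ equals $1$ (so $T$ vanishes on the diagonal), and if $i \sim j$ with $i \ne j$ then $i,j$ cannot both lie in $W$, forcing $A_{ij}=T_{ij}=0$; hence $T$ is feasible. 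Since the indicator vector $\mathbf{1}_W$ is an eigenvector of $A$ with eigenvalue $|W|$ while every other eigenvalue is $0$ or $1$, we get $\|I+T\| = |W| = \alpha(G)$, and therefore $\vth(G) \ge \alpha(G)$.

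\emph{The bound $\vth(G) \le \chi(\cl G)$.} Fix an arbitrary feasible $T$ and write $A := I+T$; it suffices to show $\|A\| \le \chi(\cl G)$. The key first step is to pass to a Gram picture: since $A \succeq 0$ we may factor $A = W^*W$, where $w_1, \dots, w_n \in \C^n$ denote the columns of $W$, so that $A_{ij} = w_i^* w_j$. Feasibility then translates into $\|w_i\|^2 = A_{ii} = 1$ for every $i$ (each $w_i$ is a unit vector) and $w_i^* w_j = A_{ij} = 0$ whenever $i \sim j$. The second ingredient is the identity
\[
\|I+T\| = \lambda_{\max}(W^*W) = \lambda_{\max}(WW^*) = \Big\| \sum_{i=1}^n w_i w_i^* \Big\|,
\]
valid because $I+T$ is positive semidefinite and $W^*W$, $WW^*$ share their nonzero spectrum. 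Next I would bring in the colouring: a proper colouring of $\cl G$ with $k := \chi(\cl G)$ colours partitions $[n]$ into classes $C_1, \dots, C_k$, each independent in $\cl G$ and hence a clique of $G$. Inside a clique all distinct pairs are edges of $G$, so the unit vectors $\{w_i : i \in C_\ell\}$ are pairwise orthogonal, which makes $Q_\ell := \sum_{i \in C_\ell} w_i w_i^*$ an orthogonal projection with $\|Q_\ell\| \le 1$. Splitting the sum above along the partition and applying the triangle inequality gives $\|I+T\| = \big\|\sum_{\ell=1}^k Q_\ell\big\| \le \sum_{\ell=1}^k \|Q_\ell\| \le k$, and taking the maximum over feasible $T$ yields $\vth(G) \le \chi(\cl G)$.

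I expect the whole difficulty to sit in the upper bound, and concretely in the two observations that drive it: that feasibility of $I+T$ is exactly the statement that the Gram vectors $w_i$ are unit vectors orthogonal along the edges of $G$ (equivalently, an orthonormal representation of $\cl G$), and that rewriting $\lambda_{\max}(W^*W)$ as $\big\|\sum_i w_i w_i^*\big\|$ converts a clique partition into a sum of projections controllable by the triangle inequality. Once these are in hand, the colouring argument is routine, as is the identification ``colour class of $\cl G$ $=$ clique of $G$.''
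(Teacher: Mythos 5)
Your argument is correct. Note that the paper does not prove this statement at all: it is the classical Lov\'{a}sz sandwich theorem, quoted from \cite{Lov} as background, so there is no in-paper proof to compare against; your two halves (the matrix $J_{|W|}\oplus I_{n-|W|}$ certifying $\alpha(G)\le\vth(G)$, and the Gram-vector/clique-partition argument bounding $\|I+T\|$ by $\chi(\cl{G})$) constitute the standard proof and both check out. One remark worth making explicit: the paper's displayed definition of $\vth(G)$ only imposes $T_{i,j}=0$ for $i\sim j$ and is silent about the diagonal, under which literal reading the supremum would be infinite (take $T=tI$); you correctly supplied the intended constraint $T_{ii}=0$, which is what the paper's identification $\vth(G)=\vth(S_G)$ forces, since $S_G$ contains every diagonal matrix unit and $T\perp S_G$ therefore kills the diagonal of $T$.
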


In order to obtain an generalization of \ref{Lsandwich} we need to identify the the appropriate generalization of a graph complement. Given a submatricial operator system $S \subset M_n$ we use the orthogonal complement $S^\perp$ to generalize the graph complement. Note that the orthogonal complement of a submatricial operator system is no longer a submatricial operator system since it will fail to contain the identity operator. In fact since $I \in S$ we will have $\tr(A) = \langle A, I \rangle =0$ for every $A$ element of $S^\perp$. In \cite{St}, Stahlke works with precisely these objects. We show that it is useful to consider both submatricial operator systems and submatricial traceless self-adjoint operator spaces to generalize the graph complement.

\subsection{The complement of a non-commutative graph}\label{complement sec}

In this section, we introduce the analogue of the notion of a graph complement for non-commutative graphs. Using this, we define a notion of clique number independence number and chromatic number.

\begin{definition}
Let $G$ be a finite graph with vertex set $[n]$. The \emph{traceless self-adjoint operator space associated to $G$} is the linear space
      \begin{align*}
        \calJ_G := \Span\{E_{i,j}: i \sim j \} \subset M_{n}\;.
      \end{align*}
A \emph{traceless non-commutative graph} is any submatricial traceless self-adjoint operator space.
\end{definition}

\begin{remark}
  The traceless self-adjoint operator space $\calJ_\calG$ is the traceless non-commutative graph $S_G$ given in \cite{St}. Given a finite graph $G$ with vertex set $[n]$, we have the identity $\calJ_{G}^\perp = S_{\cl{G}}$.
  This identity in particular suggests that the graph complement of a non-commutative graph should be its orthogonal complement. In \cite{St}, Stalhke suggests that the graph complement of  $\calJ_G$ should be $(\calJ_G + \C I)^{\perp}$. However, this notion of complement would mean that $\calJ_{\cl{G}} \neq (\calJ_G + \C I)^{\perp}$ for any graph with at least two vertices. We shall see that, so long as one is willing to pay the price of working with two different notions of a non-commutative graph, the orthogonal complement is the correct analogue of the graph complement.
\end{remark}

\begin{proposition}
  The traceless self-adjoint operator subspaces of $M_n$ are exactly the orthogonal complements of submatricial operator systems. That is, $S$ is a submatricial operator system if and only if $S^\perp$ is a traceless self-adjoint operator space.
\end{proposition}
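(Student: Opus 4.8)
The plan is to establish the stated biconditional for a fixed subspace $S\subset M_n$ and then read off the set-level assertion from the fact that in the finite-dimensional Hilbert space $(M_n,\langle\cdot,\cdot\rangle)$ the map $S\mapsto S^\perp$ is an involution, so that $(S^\perp)^\perp=S$. A submatricial operator system is singled out among all linear subspaces by exactly two conditions, $I\in S$ and $S=S^*$, so I would treat these separately and show that each is the $\perp$-dual of one of the two defining features of a traceless self-adjoint operator space, namely tracelessness of every element and self-adjointness.

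The first equivalence, between ``$I\in S$'' and ``every element of $S^\perp$ is traceless,'' is routine. Since $\tr(A)=\langle A,I\rangle$ for every $A\in M_n$, if $I\in S$ then each $A\in S^\perp$ has $\tr(A)=\langle A,I\rangle=0$; conversely, if every element of $S^\perp$ is traceless then $I$ is orthogonal to $S^\perp$, whence $I\in(S^\perp)^\perp=S$. No self-adjointness hypothesis enters here.

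The one step that needs care, and the only place I anticipate any friction, is the equivalence between ``$S=S^*$'' and ``$S^\perp=(S^\perp)^*$.'' I would first record that the adjoint map $A\mapsto A^*$ is a conjugate-linear isometry of $(M_n,\langle\cdot,\cdot\rangle)$: cyclicity of the trace gives $\langle A^*,B^*\rangle=\tr(BA^*)=\overline{\langle A,B\rangle}$. Such an anti-unitary sends orthogonal complements to orthogonal complements, yielding $(S^*)^\perp=(S^\perp)^*$; the delicate point is simply keeping the complex conjugates in the right places under the convention $\langle A,B\rangle=\tr(B^*A)$. Granting this, $S=S^*$ gives $(S^\perp)^*=(S^*)^\perp=S^\perp$, and the converse implication follows by applying the same identity to $S^\perp$ together with $(S^\perp)^\perp=S$. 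Combining the two equivalences proves the biconditional, and the ``exactly'' statement then follows: complements of operator systems are traceless self-adjoint by the forward direction, while for a given traceless self-adjoint $\calJ$ one sets $S:=\calJ^\perp$, so that $S^\perp=\calJ$ and the biconditional forces $S$ to be an operator system, exhibiting $\calJ$ as its complement.
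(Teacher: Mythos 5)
Your proposal is correct and rests on the same two computations as the paper's proof: $\tr(A)=\Angle{A,I}$ handles the identity/tracelessness duality, and the conjugate-symmetry $\tr(BA^*)=\cl{\tr(B^*A)}$ handles self-adjointness of the complement. The only organizational difference is that you package the latter as ``the adjoint is an anti-unitary, hence commutes with orthocomplementation'' and invoke $(S^\perp)^\perp=S$ explicitly to get both directions at once, whereas the paper verifies each direction element-by-element; both are sound.
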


\begin{proof}
  If $S$ is an operator subsystem of $M_n$ then for any $X \in S^\perp$, $tr(X) = \Angle{X,I} = 0$. As well, if $X \in S^\perp$, for any $Y \in S$, $\tr(XY) = \cl{\tr(Y^*X^*)} = 0$. This proves that $S^\perp$ is a traceless self-adjoint operator space. Conversely, if $S$ is a traceless self-adjoint operator space, then $S^\perp$ contains $I$ since for all $X \in S$, $\Angle{X,I} = \tr(X^*I) = 0$. If $X \in S^\perp$ then $\Angle{X^*,Y} = \tr(XY) = \cl{\tr(Y^*X^*)} = 0$. Therefore, $X^* \in S^\perp$. This proves that $S^\perp$ is an operator system.
\end{proof}

\begin{proposition}
  If $G$ is a graph with vertex set $[n]$ then $S_G^\perp = \calJ_{\cl{G}}$.
\end{proposition}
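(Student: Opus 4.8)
The plan is to exploit the fact that the matrix units form an orthonormal basis of $M_n$ with respect to the Hilbert-Schmidt inner product, so that taking the orthogonal complement of a span of matrix units amounts to nothing more than complementing an index set. First I would record that $\{E_{i,j}\}_{i,j=1}^n$ is an orthonormal basis for $M_n$ under $\Angle{A,B} = \tr(B^*A)$. This is the only computation required: $\Angle{E_{i,j}, E_{k,l}} = \tr(E_{l,k}E_{i,j}) = \delta_{ik}\delta_{jl}$, and since there are $n^2$ of these mutually orthogonal unit vectors they form a basis.

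With orthonormality in hand, the key observation is that $S_G = \Span\{E_{i,j} : i \sim j \text{ or } i = j\}$ is precisely the span of a subset of this orthonormal basis. For a span of basis vectors drawn from an orthonormal basis, the orthogonal complement is simply the span of the complementary collection of basis vectors, so
\[
S_G^\perp = \Span\{E_{i,j} : i \not\sim j \text{ and } i \neq j\}.
\]
Finally I would translate this index set into the edge set of the complement graph. By definition of $\cl{G}$, the relation $i \sim_{\cl{G}} j$ holds exactly when $i \neq j$ and $i \not\sim_G j$; in particular $\cl{G}$ carries no loops. Hence the index set appearing above coincides with the edge set of $\cl{G}$, giving
\[
S_G^\perp = \Span\{E_{i,j} : i \sim_{\cl{G}} j\} = \calJ_{\cl{G}}.
\]

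There is no genuine obstacle here; the entire content is the orthonormality of the matrix units, after which the statement is a bookkeeping identity between spanning sets and index sets. The one point worth stating carefully is the treatment of the diagonal: each $E_{i,i}$ lies in $S_G$ on account of the $i=j$ clause and belongs to no $\calJ_H$, so the diagonal units drop out of both $S_G^\perp$ and $\calJ_{\cl{G}}$ consistently, and the matching of index sets goes through without exception.
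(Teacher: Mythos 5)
Your proof is correct and follows essentially the same route as the paper's: both arguments reduce to the orthonormality of the matrix units under the Hilbert--Schmidt inner product and then match index sets. If anything, you are slightly more complete, since you explicitly invoke the fact that the orthogonal complement of the span of a subset of an orthonormal basis is the span of the complementary subset, a step the paper's two-line proof leaves implicit.
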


\begin{proof}
  Observe that for $i,j,k,l \in [n]$, $E_{ij} \in S_G^\perp$ if and only if for all $k \simeq_G l$, $\tr(E_{ij}E_{kl}) = 0$. This is only possible if $i \sim_{\cl{G}} j$.
\end{proof}

It is a result of Paulsen and Ortiz~\cite[Proposition 3.1]{PO} that two graphs $G$ and $H$ of the same vertex set $[n]$ are isomorphic if and only if there is a $n \times n$ unitary matrix $U$ for which $U S_G U^* = S_H$.

\begin{corollary}
  Suppose that $G$ and $H$ are graphs with vertex set $[n]$. The graphs $G$ and $H$ are isomorphic if and only if there is an $n \times n$ unitary matrix $U$ such that $U \calJ_G U^* = \calJ_H$.
\end{corollary}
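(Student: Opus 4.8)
The plan is to reduce the statement to the Paulsen--Ortiz characterization by dualizing it through orthogonal complements, using the two preceding propositions. First I would record the identity $\calJ_G = S_{\cl{G}}^\perp$: this follows from the proposition $S_G^\perp = \calJ_{\cl{G}}$ by replacing $G$ with $\cl{G}$ and using $\cl{\cl{G}} = G$. The same substitution gives $\calJ_H = S_{\cl{H}}^\perp$. These identities are what let me translate a statement about the spaces $\calJ$ into a statement about the operator systems $S$, to which Paulsen--Ortiz applies.

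The key technical ingredient is that conjugation by a unitary $U$ is an isometry of $M_n$ for the Hilbert--Schmidt inner product, since
\begin{align*}
\Angle{UAU^*, UBU^*} = \tr\!\big(U B^* U^* U A U^*\big) = \tr(B^* A) = \Angle{A,B}.
\end{align*}
Being a linear bijection that preserves orthogonality, the map $X \mapsto UXU^*$ therefore carries the orthogonal complement of any subspace onto the orthogonal complement of its image, so that $(USU^*)^\perp = U S^\perp U^*$ for every subspace $S \subset M_n$.

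With these facts in hand the corollary becomes a chain of equivalences. Starting from $U \calJ_G U^* = \calJ_H$, I would rewrite both sides as $U S_{\cl{G}}^\perp U^* = S_{\cl{H}}^\perp$, and apply the displayed identity to obtain $(U S_{\cl{G}} U^*)^\perp = S_{\cl{H}}^\perp$. Since passing to the orthogonal complement is an involution on subspaces of $M_n$, this holds if and only if $U S_{\cl{G}} U^* = S_{\cl{H}}$, which by the Paulsen--Ortiz result is equivalent to $\cl{G} \cong \cl{H}$. Finally, because complementation preserves graph isomorphism, $\cl{G} \cong \cl{H}$ if and only if $G \cong H$, which settles both directions simultaneously.

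I do not expect a serious obstacle; the entire content lies in correctly combining the two preceding propositions with the unitary invariance of the Hilbert--Schmidt structure. The one point demanding care is the bookkeeping with complements: one must route the argument through $S_{\cl{G}}$ rather than $S_G$, so that after dualizing one lands on $\calJ_G$ itself and not on $\calJ_{\cl{G}}$.
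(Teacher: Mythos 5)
Your proposal is correct and follows essentially the same route as the paper: the paper's proof likewise observes that $(U S_G U^*)^\perp = U \calJ_{\cl{G}} U^*$ (unitary conjugation commutes with Hilbert--Schmidt orthogonal complements) and then invokes the Paulsen--Ortiz result together with the fact that complementation preserves graph isomorphism. You have simply spelled out the bookkeeping in more detail than the paper does.
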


\begin{proof}
  For any $n \times n$ unitary matrix $U$, $(U S_G U^*)^{\perp} = U \calJ_{\cl{G}} U^*$. Since $G$ and $H$ are isomorphic if and only if their graph complement is, the result follows.
\end{proof}

\begin{remark}
In \cite{Weaver} a \emph{qunatum graph} is defined as a reflexive, symmetric quantum relation on a $*$-subalgebra $\calM \subseteq M_n$. In this framework a submatricial operator system $S$ is indeed quantum graph when taking $\calM = M_n$. This approach fails to provide a complement for a quantum graph since $S^\perp$ will fail to be a reflexive quantum relation on any $\calM \subseteq M_n$ and hence will not be a quantum graph.
\end{remark}

 The notion of an independence set for an submatricial operator system was described solely in terms of an orthogonality relation. We can similarly say that an orthonormal collection of vectors $v=(v_1, \dots, v_n)$ in $\C^n$ is an \emph{independent set} for a submatricial traceless self-adjoint operator space $\calJ$ $\subset M_n$ if for any $i \neq j$, $v_iv_j^*$ is orthogonal to $\calJ$. We say $\calJ$ has a $k$-coloring if there exists an orthonormal basis $v=(v_1, \dots, v_n)$ of $\C^n$, that can be partitioned into $k$ independent sets for $\calJ$.

\begin{definition} Let $\calJ \subset M_n$ be a submatricial traceless self-adjoint operator space.
\begin{enumerate}
\item The independence number, $\alpha(\cal{J})$, is the largest $k \in \N$ such that there exists an independent set of size $k$ for $\calJ$.

\item The chromatic number $\chi(\calJ)$ is the least integer $k$ such that $\calJ$ has $k$-colouring.
\end{enumerate}
\end{definition}

It is not hard to show that $\chi$ is monotonic and $\alpha$ is reverse monotonic under inclusion. This holds when considering these as parameters on submatricial operator systems as well as submatricial traceless self-adjoint operator spaces.

 Next we show that if $G$ is a graph, $S_G$ and $\calJ_G$ have the same independence number and chromatic number.
We start with a lemma. The following proof is in \cite[Lemma 7.28]{Lecture}:
\begin{lemma}\label{lemma: Vern, 7.28}
  Let $v_1,\ldots, v_n$ be a basis for $\C^n$. There exists a permutation $\sigma$ on $[n]$ so that for each $i$, the $\sigma(i)$th component of $v_i$ is non-zero.
\end{lemma}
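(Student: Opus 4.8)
The plan is to encode the basis as the columns of a matrix and read off the desired permutation from the Leibniz expansion of its determinant. First I would form the matrix $V \in M_n$ whose $(j,i)$ entry is $v_i(j)$, the $j$th component of the vector $v_i$; thus the $i$th column of $V$ is exactly $v_i$. Because $v_1,\ldots,v_n$ is a basis of $\C^n$, the matrix $V$ is invertible, and hence $\det V \neq 0$.

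The key step is then to exploit the Leibniz formula
\begin{align*}
\det V = \sum_{\sigma \in \Sym([n])} \operatorname{sgn}(\sigma) \prod_{i=1}^n v_i(\sigma(i)).
\end{align*}
Since $\det V \neq 0$, the sum on the right cannot vanish, so at least one permutation $\sigma$ must contribute a nonzero summand. For such a $\sigma$ the product $\prod_i v_i(\sigma(i))$ is nonzero, which forces every factor $v_i(\sigma(i))$ to be nonzero. This $\sigma$ is precisely the permutation demanded by the statement.

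I do not anticipate a genuine obstacle here; the only thing requiring care is the bookkeeping of which index ranges over rows and which over columns, so that the phrase ``the $\sigma(i)$th component of $v_i$'' matches the factor $v_i(\sigma(i))$ appearing in the Leibniz term rather than its inverse. This matches the component notation already used in the preceding example.

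As an alternative that avoids determinants, I would build the bipartite graph on vertex classes $[n]$ (vector labels) and $[n]$ (coordinate positions), joining $i$ to $j$ whenever $v_i(j) \neq 0$, and invoke Hall's marriage theorem. Hall's condition holds by a dimension count: if some set $A$ of vectors had its combined support contained in fewer than $|A|$ coordinates, then those $|A|$ linearly independent vectors would lie in a subspace of dimension strictly less than $|A|$, a contradiction. A perfect matching then yields the permutation $\sigma$. I would present the determinant argument as the main proof since it is shortest, but the Hall's theorem viewpoint makes transparent that linear independence is the essential hypothesis.
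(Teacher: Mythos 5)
Your main argument is exactly the paper's proof: write the basis as the columns of a matrix, note the determinant is non-zero, and extract a permutation with non-vanishing product from the Leibniz expansion (the row/column bookkeeping you flag is harmless since $\det V = \det V^{T}$). The Hall's-marriage-theorem alternative is a nice supplementary observation but not needed; the proposal is correct and matches the paper.
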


\begin{proof}
  Let $A = [a_{i,j}]$ denote the matrix with column $i$ equal to $v_i$. Since we have a basis, $\det(A) \neq 0$. But
  \begin{align*}
    \det(A) = \sum_{\sigma \in Sym([n])} sgn(\sigma)a_{1,\sigma(1)}\cdots a_{n,\sigma(n)}\;.
  \end{align*}
  There must therefore be some $\sigma$ for which the product $a_{1,\sigma(1)}\cdots a_{n,\sigma(n)}$ is non-zero. This permutation works.
\end{proof}

\begin{theorem}\label{thm: insensitive}
Let $G$ be a graph on $n$ vertices, we have $\alpha(G) = \alpha(S_{G}) = \alpha(\calJ_{G})$ and $\chi(G) = \chi(S_G) = \chi(\calJ_G)$.
\end{theorem}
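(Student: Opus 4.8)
The plan is to sandwich each parameter between its classical counterpart by exploiting the inclusion $\calJ_G \subseteq S_G$ together with the monotonicity properties noted just above the theorem, so that only one genuinely new inequality must be proved per parameter. Concretely, I would establish the chains
\[
\alpha(G) \leq \alpha(S_G) \leq \alpha(\calJ_G) \leq \alpha(G), \qquad \chi(G) \leq \chi(\calJ_G) \leq \chi(S_G) \leq \chi(G),
\]
where the middle inequality in each chain is just reverse monotonicity of $\alpha$ (resp. monotonicity of $\chi$) applied to $\calJ_G \subseteq S_G$. The remaining four inequalities split into two easy ones, coming from the standard basis, and two substantive ones bounding the $\calJ_G$-parameter from above by the classical parameter, which is where Lemma~\ref{lemma: Vern, 7.28} enters.

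The computational backbone is the identity $\langle v_i v_j^*, E_{a,b}\rangle = v_i(a)\overline{v_j(b)}$, obtained from a direct trace computation, which turns operator orthogonality into a statement about supports: an orthonormal family $(v_1,\dots,v_k)$ is independent for $\calJ_G$ precisely when $v_i(a)\overline{v_j(b)} = 0$ for all $i \neq j$ and all $a \sim b$, i.e. no vertex of $\supp(v_i)$ is adjacent to any vertex of $\supp(v_j)$; for $S_G$ one gets in addition, from the diagonal matrices $E_{a,a} \in S_G$, that the supports are pairwise disjoint. The two easy inequalities $\alpha(G) \leq \alpha(S_G)$ and $\chi(S_G) \leq \chi(G)$ then follow by feeding a classical independent set (resp. colouring) into the standard basis $e_1,\dots,e_n$: if $i \not\sim j$ and $i \neq j$ then $E_{i,j} = e_i e_j^*$ is orthogonal to $S_G$, so a classical independent set becomes an independent set of equal size, and a proper classical colouring partitions $(e_1,\dots,e_n)$ into the required number of independent sets for $S_G$.

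For the substantive inequalities I would invoke Lemma~\ref{lemma: Vern, 7.28}. For $\alpha(\calJ_G) \leq \alpha(G)$, take an independent set $(v_1,\dots,v_k)$ for $\calJ_G$, extend it to an orthonormal basis of $\C^n$, and let $\sigma$ be the permutation supplied by the lemma, so that $\sigma(i) \in \supp(v_i)$ for every $i$. Then $\sigma(1),\dots,\sigma(k)$ are distinct (as $\sigma$ is injective) and pairwise non-adjacent (since $\sigma(i) \in \supp(v_i)$ and $\sigma(j) \in \supp(v_j)$ for $i \neq j$), so they form a classical independent set of size $k$. The chromatic inequality $\chi(G) \leq \chi(\calJ_G)$ is the crux of the theorem and runs analogously: from a $k$-colouring of $\calJ_G$, that is an orthonormal basis $(v_1,\dots,v_n)$ partitioned into independent sets $B_1,\dots,B_k$, apply the lemma to get a permutation $\sigma$ with $v_i(\sigma(i)) \neq 0$, and colour the vertex $\sigma(i)$ by the block containing $v_i$. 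This is a well-defined colouring of $[n]$ with at most $k$ colours, and if adjacent vertices $\sigma(i) \sim \sigma(j)$ received the same colour then $i \neq j$ (no self-loops) while $v_i, v_j$ lie in a common independent set, forcing $v_i(\sigma(i))\overline{v_j(\sigma(j))} = 0$ and contradicting $\sigma(i) \in \supp(v_i)$, $\sigma(j) \in \supp(v_j)$.

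The main obstacle is exactly this last choice of representatives. For $S_G$ the diagonal elements are available, so independence already forces the supports of distinct coloured vectors to be disjoint and distinct vertex representatives can be chosen by hand; but $\calJ_G$ contains no diagonal matrices, so support-disjointness is unavailable and a naive pick from each $\supp(v_i)$ may collide. Lemma~\ref{lemma: Vern, 7.28} is precisely what repairs this, producing a system of distinct representatives $\sigma(i) \in \supp(v_i)$ simultaneously for all basis vectors, which is what makes the colour assignment a genuine function on $[n]$ while keeping it proper. Once the two chains are assembled, the three-way equalities for both $\alpha$ and $\chi$ follow at once.
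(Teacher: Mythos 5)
Your proposal is correct and follows essentially the same route as the paper: the standard basis gives $\alpha(G)\leq\alpha(S_G)$ and $\chi(S_G)\leq\chi(G)$, (reverse) monotonicity under the inclusion $\calJ_G\subseteq S_G$ handles the middle inequalities, and Lemma~\ref{lemma: Vern, 7.28} supplies the system of distinct representatives needed for $\alpha(\calJ_G)\leq\alpha(G)$ and $\chi(G)\leq\chi(\calJ_G)$. In fact your write-up of the chromatic case is more explicit than the paper's, which simply asserts that ``the proof for $\chi$ follows the same argument.''
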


\begin{proof}
The inclusion $\alpha(S_G) \leq \alpha(\calJ_G)$ follow from reverse monotonicity. If $i_1 , \dots, i_k$ are an independent set of vertices in the graph $G$ then we have that the standard vectors $e_{i_{1}}, \dots, e_{i_{k}}$ is an independent set for $S_G$ so we get
\begin{align*}
  \alpha(G) \leq \alpha(S_G) \leq \alpha(\calJ_G)\;.
\end{align*}
Next suppose that $v_1, \dots ,v_k$ are an independent set for $\calJ_G$. Then since $v_1 , \dots ,v_k$ is an linearly independent set of vectors we can find a permutaiton $\sigma$ on $[n]$ so that $\Angle{v_i, e_{\sigma(i)}}$ is non-zero for all $i$.

We note that if vertices $\sigma(j)$ and $\sigma(k)$ are adjacent in $G$ then we have $E_{\sigma(j),\sigma(k)} \in \calJ_G$. But then $\langle v_jv^*_k, E_{\sigma(j),\sigma(k)} \rangle = \Angle{v_j,e_\sigma(j)} \Angle{e_{\sigma(k)},v_k} \neq 0 $ a contradiction. Thus $\sigma(1),\ldots, \sigma(k)$ are an independent set for the graph $G$ so $\alpha(\calJ_G) \leq \alpha(G)$. The proof for $\chi$ follows the same argument.
\end{proof}

Recall for a classical graph $G$ the clique number, $\omega(G)$, satisfies that $\omega(G) = \alpha(\cl{G})$.

\begin{definition} Let $S$ be a submatricial operator system and let $\calJ$ be a submatricial traceless self-adjoint operator space.

\begin{enumerate}
\item Define the clique number, $\omega(S)$, to be the independence number of the submatricial traceless self-adjoint operator space $S^\perp$.
\item Define the clique number, $\omega(\calJ)$, to be the independence number of the submatricial operator system $\calJ^\perp$.
\end{enumerate}
\end{definition}

We can use Theorem~\ref{thm: insensitive} to conclude that for any graph $G$ we have $\omega(G)= \omega(S_G) = \omega(\calJ_G)$.

The next proposition shows that $\alpha$, $\omega$, and $\chi$ may be computed purely from the associated parameters for graphs. We can achieve this by associating a family of graphs to each submatricial traceless self-adjoint operator space or submatricial operator system.

\begin{definition}\label{def: confusability graph of a basis}
Given a submatricial operator system $S \subseteq M_n$ and an orthonormal basis $v = (v_1, \dots , v_n)$ we can construct two different graphs.

\begin{enumerate}

\item \emph{The confusability graph} of $v$, with respect to $S$, denoted $H_{v}(S) $, is the graph on $n$ vertices with $i \sim j$ if and only if $v_{i}v_j^* \in S$.

\item \emph{The distinguishability graph} of $v$, with respect to $S$, denoted $G_v(S)$ is the graph on $n$ vertices with $i \sim j$ if and only if $v_iv_j^* \perp S$.
\end{enumerate}
\end{definition}

We can also define the confusability and distinguishability graphs of an orthonormal basis $v= (v_1, \dots ,v_n)$ with respect to submatricial traceless self-adjoint operator spaces $\calJ \subseteq M_n$ in the same way. We would then have for $S = \calJ^\perp$,  $G_{v}(S) = H_v(\calJ)$ and $H_v(S) = G_v(\calJ).$ When it is clear what the underlying system or submatricial traceless self-adjoint operator space is we simply write $G_v$ and $H_v$.

\begin{theorem}\label{thm: Non-commuatative parameters from graph paramaters}
 Let $\calJ$ be a submatricial traceless self-adjoint operator space in $M_n$ and let $\calB$ denote the set of ordered orthonormal bases for $\C^n$. We have the identities
  \begin{align*}
    \alpha(\calJ) &= \sup_{v \in \calB} \alpha(\cl{G_v})\;, \\ \chi(\calJ) &= \inf_{v \in \calB} \chi(\cl{G_v})\;, \text{ and } \\ \omega(\calJ) &= \sup_{v \in \calB} \omega (H_v)\;.
  \end{align*}
  The same identity holds if we replace $\calJ$ with a submatricial operator system in $M_n$.
\end{theorem}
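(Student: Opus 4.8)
The three identities all rest on one dictionary, together with the elementary graph facts that $\omega(G) = \alpha(\cl{G})$ and that a partition of the vertices of a graph into cliques is the same thing as a proper colouring of its complement. The dictionary is this: for a fixed orthonormal basis $v = (v_1,\dots,v_n)$, a subset $C \subseteq [n]$ indexes an independent set $\{v_i : i \in C\}$ for $\calJ$ exactly when $v_iv_j^* \perp \calJ$ for all distinct $i,j \in C$, i.e. exactly when $C$ is a clique of the distinguishability graph $G_v$, equivalently an independent set of $\cl{G_v}$. (The orthonormality demanded of an independent set for $\calJ$ is automatic here, since the $v_i$ are drawn from an orthonormal basis.) The plan is to use this to translate each operator-space parameter into a graph parameter of $G_v$ or $H_v$, and to pass between arbitrary orthonormal systems and full bases by extension.

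For the $\alpha$ identity I would prove both inequalities. For one direction, given any $v \in \calB$, every clique of $G_v$ yields an independent set for $\calJ$ of the same size, so $\alpha(\cl{G_v}) = \omega(G_v) \le \alpha(\calJ)$, and taking the supremum gives $\sup_{v} \alpha(\cl{G_v}) \le \alpha(\calJ)$. For the reverse, take an independent set $(w_1,\dots,w_m)$ for $\calJ$ of maximum size $m = \alpha(\calJ)$; being an orthonormal collection it extends to an orthonormal basis $v = (w_1,\dots,w_m,\dots,w_n) \in \calB$, and then $\{1,\dots,m\}$ is a clique of $G_v$, so $\alpha(\cl{G_v}) \ge m$.

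For the $\chi$ identity, fix $v \in \calB$ and observe that a partition of $[n]$ into independent sets for $\calJ$ is precisely a partition of the vertices of $\cl{G_v}$ into independent sets, i.e. a proper colouring of $\cl{G_v}$; hence the least number of independent sets for $\calJ$ into which $v$ can be partitioned equals $\chi(\cl{G_v})$. Minimising over all bases, and recalling that $\chi(\calJ)$ is by definition the least $k$ for which some basis admits a partition into $k$ independent sets, gives $\chi(\calJ) = \inf_{v} \chi(\cl{G_v})$. Since both optimisations range over positive integers, the supremum and infimum are attained and no limiting argument is needed.

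For the $\omega$ identity I would reduce to the $\alpha$ case. By definition $\omega(\calJ) = \alpha(\calJ^\perp)$, and $\calJ^\perp$ is a submatricial operator system. Applying the $\alpha$ identity to $S := \calJ^\perp$ gives $\alpha(S) = \sup_{v} \alpha(\cl{G_v(S)})$; using $(\calJ^\perp)^\perp = \calJ$ together with the relation $G_v(\calJ^\perp) = H_v(\calJ)$ recorded after Definition~\ref{def: confusability graph of a basis}, and $\alpha(\cl{H_v}) = \omega(H_v)$, this becomes $\omega(\calJ) = \sup_{v} \omega(H_v)$. The statement with a submatricial operator system $S$ in place of $\calJ$ is proved by the identical dictionary, its $\omega$-identity again following from the $\alpha$-identity applied to the traceless space $S^\perp$ and the relation $(S^\perp)^\perp = S$. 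The only point demanding care is the bookkeeping of complements, namely the asymmetry whereby $\alpha$ and $\chi$ are expressed through $\cl{G_v}$ while $\omega$ is expressed through $H_v$ with no complement; there is no genuine analytic difficulty.
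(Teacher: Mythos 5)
Your proposal is correct and follows essentially the same route as the paper: translate independent sets for $\calJ$ into cliques of $G_v$ (equivalently independent sets of $\cl{G_v}$), extend orthonormal collections to full bases for one inequality, and read the parameters off a fixed basis for the other. The only cosmetic difference is that you derive the $\omega$ identity from the $\alpha$ identity applied to $\calJ^\perp$ via $G_v(\calJ^\perp) = H_v(\calJ)$, whereas the paper runs the extension argument directly for cliques; the underlying computation is the same.
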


\begin{proof}
Suppose $v_1 , \dots, v_c$ is a maximal independent set for $\calJ$, that is for $i \neq j$ we have $v_iv_j^* \perp \calJ$. We can extend this collection to an orthonormal basis $v= (v_1, \dots, v_c, v_{c+1}, \dots v_n)$. Note that the vertices $1, \dots ,c$ in the graph $\cl{G_v}$ are an independence set since for distinct $i ,j \in [c] $ we have $v_iv_j^* \perp \calJ $. This gives $i \sim j$ in $G_v$. Thus there is no edge between $i$ and $j$ in $\cl{G_v}$. Therefore $\alpha(\cl{G_v}) \geq c$ so we have $\alpha(\calJ) \leq \sup_{v \in \calB} \alpha(\cl{G_v})$.
Conversely, for each $v \in \calB$ if $i_1, \dots, i_c$ are an independent set for $\cl{G_v}$ then $i_{j} \sim i_{k}$ in $G_V$. We then have $v_{i_{1}} ,\dots, v_{i_{c}}$ is an independent set for $\calJ$. This gives $\alpha(\calJ) \geq \alpha(\cl{G_v})$.

The proof of the second identity is similar. If $\chi(\calJ) =c$ then there exists orthonormal basis $v=(v_1, \dots ,v_n)$ and a partition $P_1 , \dots P_c$ of $[n]$ such that $v_iv_j^* \perp \calJ$ for distinct $i$ and $j$ in the same partition. Define a colouring $f$ of $\cl{G_v}$ by having $f(i)= l$ if and only if $i \in P_l$. We see that for $i \neq j$ if we have $f(i) =f(j)$ then $v_iv_j^* \perp \calJ$ giving that $i \sim j$ in $G_v$ so $f$ is indeed a $c$ colouring of $\cl{G_v}$. This gives $\chi(\calJ) \geq  \inf_{v \in \calB} \chi(\cl{G_v})$.
Conversely, if $f$ is any c colouring of $\cl{G_v}$ for some $v \in \calB$ then we can obtain a $c$ colouring of $\calJ$ by partitioning $[n]$ into sets $P_1, \dots P_c$ where $i \in P_l$ if and only if $f(i)=l$. Then if distinct $i,j \in P_l$ we have $i \sim j $ in $G_v$ so $v_iv_j^* \perp \calJ$.

Lastly, suppose $(v_1, \dots, v_k) $ is a collection of orthonormal vectors such that for distinc $i,j$ we have $v_iv_j^* \in \calJ$. We can extend this set to a orthonormal basis $v=(v_1, \dots v_n)$ and we immediately get that the vertices $\lbrace 1 , \dots , k \rbrace$ form a clique in $H_v$. For the other direction note for any basis $v=(v_1, \dots v_n)$ $H_v$ has a clique $i_1, \dots ,i_k$ then $v_1, \dots v_k$ will satisfy $v_iv_j^* \in \calJ$ for distinct $i$ and $j$.
\end{proof}

We now extend the definition of Lov\'{a}sz' theta function to non-commutative graphs.

\begin{definition}
Let $S$ be a submatricial operator system and $\calJ$ be a submatricial traceless self-adjoint operator space. Define the theta number of a submatricial operator system, $\vth(S)$ and the complementary theta number of a submatricial traceless self-adjoint operator space, $\cl{\vth}$ as follows.
\begin{enumerate}
\item $ \vth(S) = \sup \lbrace \|I +T \| : T \in M_n, \, I+T \geq 0, \, T \perp S \rbrace.$

\item $ \cl{\vth}(\calJ) = \sup \lbrace \|I+T \| : T \in M_n, \, I+T \geq 0, T \in \calJ \rbrace $.
\end{enumerate}
\end{definition}

Observe that $\vth(S_G) = \vth(G)$ and $\cl{\vth}(\calJ_G) = \vth( \cl{G})$ for all graphs $G$.

\begin{example}
Recall the previously mentioned submatricial operator system $S:= \Span \lbrace I, E_{i,j} : i \neq j \rbrace \subset M_n$. We see that $\vth(S) =n$ since we can take $T$ to be the diagonal matrix with $n-1$ for the $1,1$ entry and $-1$ for all other diagonal entries. We also see that if $v=(e_1, \dots, e_n)$ is the standard basis for $C^n$ then $v$ is a clqiue for $S$ and thus we have $\chi(S^\perp)=1$. This shows that using the definition of the chromatic number from \cite{Lecture} we can not hope to generalize the Lov\'{a}sz sandwich theorem.
\end{example}

Motivated by applications in quantum information theory many non-commutative graph parameters can have \emph{entanglement assisted} and \emph{completely bounded} versions. We define them here for later use. Let $X$ be a submatricial operator system or a submatricial traceless self-adjoint operator space and let $M_d(X)$ denote $d$ by $d$ matrices with entries from $X$.. For $d \geq 1$, the \emph{$d$th degree independence number} of $X$, denoted $\alpha_d(X)$, is defined by $\alpha_d(X)= \alpha(M_d(X))$. We then define \emph{the completely bounded independence number}, $\alpha_{cb}(X)$, as the supremum over $d \geq 1$ of $\alpha_d(X)$. Similarly for a submatricial operator system $S$ we define the \emph{$d$th degree theta number} $\vth_d(S)$ to be $\vth(M_d(S))$ and let $\theta_{cb}(S) = \inf \theta_{d}(S)$ denote the \emph{the completely bounded theta number} of $S$. For a submatricial traceless self-adjoint operator space $\calJ$ we let $\cl{\vth}_d(\calJ) = \cl{\vth}(M_d(\calJ))$ and use $\cl{\vth}_{cb}(\calJ)$ to denote the supremum over $d \geq 1$ of $\cl{\vth}_d(\calJ)$.

\section{Non-commutative Lov\'{a}sz inequality}\label{Sand section}
We see by the previous example that one needs a different generalization of the chromatic number in order to obtain a Lov\'{a}sz sandwich Theorem for non-commutative graphs. Here we introduce the strong and minimal chromatic number of a submatricial operator system and provide a generalization on Lov\'{a}sz theorem.

\subsection{The Strong chromatic number}

Let $\calJ \subset M_n$ be a submatricial traceless self-adjoint operator space. A collection of orthonormal vectors $v=(v_1, \dots ,v_k)$ in $\C^n$ is called a \emph{strong independent set for $\calJ$} if for any $i, j$,we have $v_iv_j^*$ is orthogonal to $\calJ$. We say that $\calJ$ has a \emph{strong $k$-colouring} if there exist an orthonormal basis $v=(v_1, \dots, v_n)$ of $\C^n$ that can be partitioned into $k$ strong independent sets for $\calJ$. We will show in Corollary \ref{cor: hatchi is faithful}, $\Hat{\chi}(\calJ_G)$ agrees with the chromatic number of $G$, for any graph $G$.

\begin{definition}
Let $\calJ \subset M_n$ be a submatricial traceless self-adjoint operator space. The \emph{strong chromatic number}, $\widehat{\chi}(\calJ)$, is the least $k \in \N$ such that $\calJ$ has a strong $k$-colouring. If $\calJ$ has no strong-$k$ colouring then we say $\widehat{\chi}(\calJ)= \infty$.
\end{definition}

As with $\chi$ we have $\widehat{\chi}$ is monotonic with respect to inclusion.

\begin{example}\label{Example: Traceless diagonals }
  Suppose that $S = \C1+ \Span\{E_{i,j}: i \neq j\} \subset M_n$. Define $v_k = (1,\zeta^k, \zeta^{2k},\ldots, \zeta^{(n-1)k})$. Observe that the $v_i$ are orthogonal and that $v_kv_k^*$ belongs to $S$ for all $k$. Thus $S^\perp$ does have a strong-$n$ colouring and we get $\widehat{\chi}(S^\perp) \leq n$.
\end{example}

\begin{example}
  Consider the submatricial traceless self-adjoint operator space $\calJ = \C \Delta \subset M_n$ where $\Delta = \diag(n, -1,-1,-1,\ldots, -1)$. Observe that $\calJ \subset S^\perp$. By monotonicity, $\widehat{\chi}(\calJ) \leq \widehat{\chi}(S^\perp) \leq n$. It is known that $\cl{\vth}(\calJ) = n$ (see \cite[Remark 4.3]{PO}). We show in Theorem \ref{Generalized Lovasz Inequality} that $\widehat{\chi}$ is bounded below by $\cl{\vth}$ Thus we have $\widehat{\chi}(\calJ)= \widehat{\chi}(S^\perp) =n$.
\end{example}
In \cite{St} Stahlke introduces a different chromatic number for submatricial traceless self-adjoint operator spaces.

\begin{definition}
  Let $\calJ$ and $\calK$ be submatricial traceless self-adjoint operator spaces in $M_n$ and $M_m$ respectively. We say that there is a \emph{graph homomorphism from $\calJ$ to $\calK$}, denoted $\calJ \to \calK$, if there is a cptp (completely positive and trace preserving) map $\calE : M_n \to M_m$ with associated Kraus operators $E_1,\ldots, E_r$ for which $E_i \calJ E^*_j \subset \calK$ for any $i$ and $j$.
\end{definition}
Stalhke's chromatic number of a submatricial traceless self-adjoint operator space $\calJ$, denoted $\chi_{St}(\calJ)$, is the least integer $c$ for which there is a graph homomorphism $\calJ \to \calJ_{K_c}$ if one exists. We set $\chi_{St}(\calJ)=\infty$ otherwise.

Observe that $\chi_{St}$ is monotonic under graph homomorphism by construction.

\begin{theorem}\label{thm: clchi <chi_St}
For any submatricial traceless self-adjoint operator space $\calJ \subset M_n$ we have $\widehat{\chi} (\calJ) \geq \chi_{St}(\calJ)$.
\end{theorem}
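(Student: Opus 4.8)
The plan is to prove the statement directly: assuming $\widehat{\chi}(\calJ) = c$ is finite (the case $c = \infty$ being vacuous), I would manufacture an explicit completely positive trace preserving map $\calE : M_n \to M_c$ witnessing a graph homomorphism $\calJ \to \calJ_{K_c}$, which forces $\chi_{St}(\calJ) \leq c$. By definition of $\widehat{\chi}$ there is an orthonormal basis $v = (v_1, \dots, v_n)$ of $\C^n$ together with a partition $P_1, \dots, P_c$ of $[n]$ into strong independent sets for $\calJ$. I encode this as a colouring $f : [n] \to [c]$, where $f(i) = l$ exactly when $i \in P_l$.

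The key construction is to take the $c \times n$ rank-one operators $V_i := e_{f(i)} v_i^*$ as Kraus operators and set $\calE(X) = \sum_{i=1}^n V_i X V_i^*$. This is completely positive by virtue of being written in Kraus form, and it is trace preserving because $\sum_i V_i^* V_i = \sum_i v_i e_{f(i)}^* e_{f(i)} v_i^* = \sum_i v_i v_i^* = I_n$, using that $v$ is an orthonormal basis. This mirrors the classical fact that a proper colouring $f$ of a graph $G$ produces the graph homomorphism $G \to K_c$ whose channel has Kraus operators $e_{f(i)} e_i^*$; the only new feature here is that the colouring lives in an arbitrary orthonormal basis rather than the standard one.

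The heart of the argument is the verification that $V_i \calJ V_j^* \subseteq \calJ_{K_c}$ for all $i, j$. A direct computation gives $V_i X V_j^* = (v_i^* X v_j)\, E_{f(i), f(j)}$ for $X \in M_n$, so everything reduces to a dichotomy on the colours $f(i), f(j)$. When $f(i) \neq f(j)$ the matrix unit $E_{f(i),f(j)}$ is off-diagonal, hence lies in $\calJ_{K_c}$, and we are done regardless of the scalar. When $f(i) = f(j)$ the indices $i$ and $j$ lie in a common strong independent set, so $v_i v_j^* \perp \calJ$; unwinding the Hilbert--Schmidt inner product this says precisely that $v_i^* X v_j = \Angle{X, v_i v_j^*} = 0$ for every $X \in \calJ$, whence $V_i X V_j^* = 0 \in \calJ_{K_c}$. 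Combining the two cases yields the homomorphism and the desired inequality.

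The step I expect to carry the real content is isolating the correct Kraus operators and recognizing why the \emph{strong} (rather than ordinary) independence hypothesis is exactly what is needed: $\calJ_{K_c}$ consists of off-diagonal matrices only, so the potentially diagonal contributions $E_{f(i),f(i)}$ coming from same-colour pairs must be annihilated, and this is guaranteed precisely by the requirement that $v_i v_j^*$ be orthogonal to $\calJ$ even when $i = j$. An ordinary colouring would leave these diagonal terms alive and the containment in $\calJ_{K_c}$ could fail, so this is the one place where the definition of $\widehat{\chi}$ is genuinely used.
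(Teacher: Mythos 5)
Your proof is correct, but it takes a more direct route than the paper. The paper never writes down Kraus operators: it reorders the basis, conjugates by the unitary $U: v_i \mapsto e_i$ to obtain the inclusion $\bigoplus_{\ell=1}^{c} M_{|P_\ell|} \subset (U\calJ U^*)^\perp$, and then chains two graph homomorphisms (unitary conjugation, followed by the inclusion $U\calJ U^* \subset \bigl(\bigoplus_\ell M_{|P_\ell|}\bigr)^\perp$) together with the monotonicity of $\chi_{St}$ under homomorphisms and the classical identification $\chi_{St}\bigl(\bigl(\bigoplus_\ell M_{|P_\ell|}\bigr)^\perp\bigr) = \chi(\cl{G}) = c$ for $G$ a disjoint union of complete graphs. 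You instead build the witnessing channel $\calE(X) = \sum_i V_i X V_i^*$ with $V_i = e_{f(i)}v_i^*$ in one step and verify $V_i \calJ V_j^* \subset \calJ_{K_c}$ by hand; your computation $V_i X V_j^* = (v_i^* X v_j)E_{f(i),f(j)}$ and the identity $v_i^* X v_j = \Angle{X, v_iv_j^*}$ are both correct, as is the trace-preservation check $\sum_i V_i^*V_i = \sum_i v_iv_i^* = I$. What your approach buys is self-containedness: you get $\chi_{St}(\calJ) \leq c$ straight from the definition of $\chi_{St}$, without invoking monotonicity of $\chi_{St}$ or Stahlke's computation of $\chi_{St}$ for classical graphs. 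What the paper's approach buys is reusability of the intermediate inclusion into the complement of a block-diagonal algebra, which fits its general theme of transporting parameters along homomorphisms. You also correctly isolate the one place the \emph{strong} colouring hypothesis is needed, namely killing the diagonal terms $E_{f(i),f(i)}$ via $v_iv_i^* \perp \calJ$; an ordinary colouring would indeed not suffice here.
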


\begin{proof}
Suppose $\widehat{\chi}(\calJ) = r$. There exists a orthonormal basis $v_1, \dots ,v_n$ that can be partitioned into strong independent sets $P_1 , \dots ,P_r$. By reordering the vectors, we may assume that whenever $v_i \in P_\ell$ and $v_j \in P_{\ell +1}$, that $i < j$. By conjugating by the unitary $U: v_i \mapsto e_i$, we get the inclusion $\bigoplus^{r}_{i=1} M_{|P_{i}|} \subset U \calJ^\perp U^* = (U\calJ U^*)^\perp$.

 This then gives us  $(\bigoplus^{r}_{i=1} M_{|P_{i}|})^\perp \supset (U\calJ U^*)$. We have that $\calJ \rightarrow U\calJ U^*$ by conjugating by the unitary $U$. Similarly we have $U \calJ U^* \rightarrow (\oplus^{r}_{i=1} M_{|P_{i}|})^\perp $ by inclusion. Since $\chi_{St}$ is monotonic with respect to homomorphisms we get $ \chi_{St}(\calJ) \leq \chi_{St}((\oplus^{r}_{i=1} M_{|P_{i}|})^\perp) = \chi(\cl{G}) = r$ where $G$ is the disjoint union of $r$ complete graphs.
\end{proof}

\begin{corollary}
  If $\calJ \subset M_n$ is a submatricial traceless self-adjoint operator space for which for some basis $v=(v_1,\ldots, v_n)$, the diagonals $v_iv_i^*$ are orthogonal to $\calJ$, then $\chi_{St}(\calJ) \leq n$.
\end{corollary}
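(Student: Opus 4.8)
The plan is to reduce to Theorem~\ref{thm: clchi <chi_St}, which asserts $\widehat{\chi}(\calJ) \geq \chi_{St}(\calJ)$ for every submatricial traceless self-adjoint operator space. Consequently it is enough to establish the single bound $\widehat{\chi}(\calJ) \leq n$, after which the two inequalities chain to give $\chi_{St}(\calJ) \leq \widehat{\chi}(\calJ) \leq n$.

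To bound the strong chromatic number I would produce an explicit strong $n$-colouring directly from the hypothesized (orthonormal) basis $v = (v_1, \ldots, v_n)$. Recall that a strong independent set is an orthonormal family $(w_1, \ldots, w_k)$ satisfying $w_i w_j^* \perp \calJ$ for all indices $i$ and $j$, the diagonal cases $i = j$ included. The crucial point is that for a one-element set $\{v_i\}$ this requirement collapses to the single condition $v_i v_i^* \perp \calJ$, which is precisely what the hypothesis supplies. Hence each singleton $\{v_i\}$ is a strong independent set, and the partition of $v$ into its $n$ singletons $\{v_1\}, \ldots, \{v_n\}$ is a strong $n$-colouring of $\calJ$. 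This yields $\widehat{\chi}(\calJ) \leq n$ and completes the argument.

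I expect no serious obstacle here: the entire content lies in observing that the diagonal-orthogonality hypothesis is exactly the singleton instance of the strong independence condition, so that the trivial partition into singletons already certifies a strong $n$-colouring. This mirrors the computation carried out in Example~\ref{Example: Traceless diagonals }, where the same singleton reasoning shows $\widehat{\chi}(S^\perp) \leq n$. The only point requiring minor care is to read the ``basis'' in the statement as an orthonormal basis, in keeping with the definition of a strong colouring.
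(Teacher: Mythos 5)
Your proof is correct and is exactly the intended argument: the paper states this corollary without proof immediately after Theorem~\ref{thm: clchi <chi_St}, and the only content is the observation that singletons $\{v_i\}$ with $v_iv_i^*\perp\calJ$ are strong independent sets, giving $\chi_{St}(\calJ)\leq\widehat{\chi}(\calJ)\leq n$. Your remark that ``basis'' must be read as \emph{orthonormal} basis (as the definition of a strong colouring requires) is a fair and correct caveat about the statement as written.
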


We can also define the completely bounded version of the strong chromatic number. For $d \geq 1$ let $\widehat{\chi}_d(\calJ) = \widehat{\chi}(M_d(\calJ))$ for $\calJ$ a submatricial traceless self-adjoint operator space. Then $\widehat{\chi}_{cb}$ is defined as the infimum over $d \geq 1$ of $\widehat{\chi}_d(\calJ)$. Note that if $\calJ \subset M_n$ and $v=(v_1, \dots,v_n)$ can be partitioned into strong independent sets $P_1, \dots, P_k$. Let $Q_{s} = \lbrace v_i \otimes e_j : v_i \in P_s \text{ and } j=1, \dots, d \rbrace $. Then $Q_1, \dots , Q_k$ partition the orthonormal basis $\lbrace v_i \otimes e_j : i=1, \dots, n, \, j=1, \dots, d \rbrace$ into $k$ strong independent sets for $M_d(\calJ)$. Thus we have  $\widehat{\chi}_d(\calJ) \leq \widehat{\chi}(\calJ)$ for all $d \geq 1$.

Recall that for $d,n \geq 1$, the partial trace map is
\begin{align*}
  M_d \otimes M_n \to M_n: X \otimes Y \mapsto \tr(X) Y\;.
\end{align*}

\begin{corollary}
  For any submatricial traceless self-adjoint operator space $\calJ \subset M_n$ we have $\widehat{\chi}_{cb} (\calJ)\geq \chi_{St}(\calJ)$.
\end{corollary}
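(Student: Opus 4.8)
The plan is to combine Theorem~\ref{thm: clchi <chi_St}, applied to each $M_d(\calJ)$, with the monotonicity of $\chi_{St}$ under graph homomorphisms. Since $\widehat{\chi}_{cb}(\calJ) = \inf_{d \geq 1} \widehat{\chi}(M_d(\calJ))$, it suffices to prove the single inequality $\widehat{\chi}(M_d(\calJ)) \geq \chi_{St}(\calJ)$ for every fixed $d \geq 1$; an infimum of quantities each bounded below by $\chi_{St}(\calJ)$ is then itself bounded below by $\chi_{St}(\calJ)$ (this remains correct when $\chi_{St}(\calJ) = \infty$).

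First I would record that $M_d(\calJ) = M_d \otimes \calJ \subset M_{dn}$ is again a submatricial traceless self-adjoint operator space: it is closed under the adjoint because $M_d$ and $\calJ$ are, and any element $\sum_{k,l} E_{k,l} \otimes A_{k,l}$ with $A_{k,l} \in \calJ$ has trace $\sum_k \tr(A_{k,k}) = 0$. Hence Theorem~\ref{thm: clchi <chi_St} applies to $M_d(\calJ)$ and gives
\begin{align*}
  \widehat{\chi}(M_d(\calJ)) \geq \chi_{St}(M_d(\calJ)).
\end{align*}
The remaining task is to show $\chi_{St}(M_d(\calJ)) \geq \chi_{St}(\calJ)$, which by monotonicity of $\chi_{St}$ follows once we exhibit a graph homomorphism $\calJ \to M_d(\calJ)$. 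I would take the ampliation into the top-left corner, that is the map $\calE \colon M_n \to M_{dn}$, $\calE(X) = E_{1,1} \otimes X$, which has the single Kraus operator $V = e_1 \otimes I_n$. Then $V^* V = I_n$, so $\calE$ is completely positive and trace preserving, and for every $A \in \calJ$ we have $V A V^* = E_{1,1} \otimes A \in M_d \otimes \calJ = M_d(\calJ)$. Thus $\calE$ witnesses $\calJ \to M_d(\calJ)$, so $\chi_{St}(\calJ) \leq \chi_{St}(M_d(\calJ))$. Chaining the two inequalities yields $\widehat{\chi}(M_d(\calJ)) \geq \chi_{St}(\calJ)$ for all $d$, and taking the infimum proves the corollary.

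The only point that needs care is the direction of the homomorphism: the argument requires $\chi_{St}(M_d(\calJ)) \geq \chi_{St}(\calJ)$, so one must embed $\calJ$ into $M_d(\calJ)$ rather than the reverse. It is instructive to contrast this with the recalled partial trace $M_d \otimes M_n \to M_n$, which is completely positive and trace preserving with Kraus operators $e_k^* \otimes I_n$ and satisfies $(e_k^* \otimes I_n)\, M_d(\calJ)\, (e_l \otimes I_n) \subseteq \calJ$; it therefore provides a homomorphism $M_d(\calJ) \to \calJ$ and hence only the opposite bound $\chi_{St}(M_d(\calJ)) \leq \chi_{St}(\calJ)$. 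Together the two maps in fact give the equality $\chi_{St}(M_d(\calJ)) = \chi_{St}(\calJ)$, but only the ampliation direction is needed for the lower bound at hand.
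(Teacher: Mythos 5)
Your proof is correct and follows essentially the same route as the paper: reduce to $\widehat{\chi}(M_d(\calJ)) \geq \chi_{St}(M_d(\calJ)) \geq \chi_{St}(\calJ)$ via Theorem~\ref{thm: clchi <chi_St} and a graph homomorphism $\calJ \to M_d(\calJ)$, then take the infimum over $d$. The only (immaterial) difference is your choice of ampliation $X \mapsto E_{1,1}\otimes X$ with a single Kraus operator, where the paper uses $X \mapsto \frac{1}{d}1\otimes X$; both are valid cptp graph homomorphisms, and your write-up is if anything more careful about the direction of the homomorphism and the case $\chi_{St}(\calJ)=\infty$.
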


\begin{proof}
  Observe that $M_n \to M_d \otimes M_n :X \mapsto \frac{1}{d} 1 \otimes X$ is a graph homomorphism as is the partial trace $M_d \otimes M_n \to M_n$. It then suffices to check that $\widehat{\chi}(M_d(\calJ)) \geq \chi_{St}(M_d(\calJ))$. applying Theorem~\ref{thm: clchi <chi_St} get us the result.
\end{proof}

As is the case with $\chi$ we can approximate $\widehat{\chi}$ using the chromatic number for classical graphs.

\begin{theorem}\label{thm: chromatic to graph}
  Let $\calJ \subset M_n$ be a submatricial traceless self-adjoint operator space. Suppose that $\calB_\calJ$ denotes the set of ordered orthonormal bases $v = (v_1,\ldots, v_n)$ of $\C^n$ for which $v_iv_i^* \in \calJ$ for all $i$. For each $ v = (v_1,\ldots, v_n)$ in $\calB_\calJ$, define the graph $G_v$ with vertices $[n]$ and edge relation given by $i \sim j$ if $v_iv_j^*$ is orthogonal to  $\calJ$. Then,
  \begin{align*}
    \widehat{\chi}(\calJ) = \inf_{v \in \calB} \chi(\cl{G_v})\;,
  \end{align*}
  whenever $\widehat{\chi}(\calJ)$ is finite.
  
\end{theorem}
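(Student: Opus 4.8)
The plan is to establish the two inequalities $\widehat{\chi}(\calJ)\ge \inf_{v\in\calB_\calJ}\chi(\cl{G_v})$ and $\widehat{\chi}(\calJ)\le \inf_{v\in\calB_\calJ}\chi(\cl{G_v})$ separately, via a dictionary between strong colourings of $\calJ$ and proper vertex colourings of the complement graphs $\cl{G_v}$. The whole argument runs parallel to the proof of the identity $\chi(\calJ)=\inf_{v}\chi(\cl{G_v})$ from Theorem~\ref{thm: Non-commuatative parameters from graph paramaters}; the one genuinely new feature is the diagonal constraint contained in the definition of a strong independent set, namely that $v_iv_j^*\perp\calJ$ is demanded for $i=j$ as well as for $i\ne j$. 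This is precisely what forces any basis carrying a strong colouring to lie in $\calB_\calJ$, the orthonormal bases with $v_iv_i^*\perp\calJ$ for every $i$, and it is the reason the infimum should be taken over $\calB_\calJ$ rather than over all ordered bases. I would also record at the outset that $\widehat{\chi}(\calJ)$ is finite if and only if $\calB_\calJ\ne\varnothing$, so that under the hypothesis the infimum ranges over a non-empty set.

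For the inequality $\widehat{\chi}(\calJ)\ge\inf_{v\in\calB_\calJ}\chi(\cl{G_v})$, I would begin with a strong $c$-colouring realising $\widehat{\chi}(\calJ)=c$: an orthonormal basis $v=(v_1,\dots,v_n)$ partitioned into strong independent sets $P_1,\dots,P_c$. Because every $v_i$ sits inside some $P_\ell$ and a strong independent set requires $v_iv_i^*\perp\calJ$, the basis $v$ belongs to $\calB_\calJ$, so $G_v$ is defined. Defining $f(i)=\ell$ whenever $i\in P_\ell$, I would verify that $f$ is a proper colouring of $\cl{G_v}$: if $f(i)=f(j)$ with $i\ne j$ then $i$ and $j$ lie in a common strong independent set, whence $v_iv_j^*\perp\calJ$, i.e.\ $i\sim j$ in $G_v$ and therefore $i\not\sim j$ in $\cl{G_v}$. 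This yields $\chi(\cl{G_v})\le c$ and hence the desired bound.

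For the reverse inequality I would fix an arbitrary $v\in\calB_\calJ$ together with a proper colouring of $\cl{G_v}$ using $c=\chi(\cl{G_v})$ colours and read the correspondence backwards. Taking $P_\ell$ to be the $\ell$-th colour class, the crux is to show each $\{v_i:i\in P_\ell\}$ is a strong independent set for $\calJ$: for distinct $i,j$ in one class the properness of the colouring gives $i\not\sim j$ in $\cl{G_v}$, hence $i\sim j$ in $G_v$ and $v_iv_j^*\perp\calJ$, while the diagonal relation $v_iv_i^*\perp\calJ$ comes for free from $v\in\calB_\calJ$. Thus $v$ with the partition $P_1,\dots,P_c$ is a strong $c$-colouring, giving $\widehat{\chi}(\calJ)\le\chi(\cl{G_v})$ for every $v\in\calB_\calJ$; taking the infimum completes the proof.

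The step I expect to require the most care is not either inequality in isolation but pinning down the diagonal bookkeeping: one must observe that $\calB_\calJ$ is exactly the family of bases for which the independent sets of $\cl{G_v}$ coincide with the strong independent sets of $\calJ$, and confirm the equivalence between finiteness of $\widehat{\chi}(\calJ)$ and non-emptiness of $\calB_\calJ$. Once these are in place, the remainder is the same orthogonality accounting as in Theorem~\ref{thm: Non-commuatative parameters from graph paramaters}, applied now with the diagonal terms $v_iv_i^*$ included.
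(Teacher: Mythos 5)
Your proof is correct and follows essentially the same route as the paper, whose proof of this theorem simply cites the argument of Theorem~\ref{thm: Non-commuatative parameters from graph paramaters}; you have filled in exactly the intended details, including the diagonal bookkeeping showing that a basis carrying a strong colouring must lie in $\calB_\calJ$ and that $\widehat{\chi}(\calJ)<\infty$ iff $\calB_\calJ\neq\varnothing$. Note that you rightly read the condition ``$v_iv_i^*\in\calJ$'' in the statement as $v_iv_i^*\perp\calJ$ --- as literally written it can never hold, since elements of $\calJ$ are traceless while $\tr(v_iv_i^*)=1$ --- and this corrected reading is the one consistent with the definition of a strong independent set.
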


\begin{proof}
  The proof is exactly as in Theorem~\ref{thm: Non-commuatative parameters from graph paramaters}.
\end{proof}

\begin{corollary}\label{cor: hatchi is faithful}
  For any finite graph $G$, $\Hat{\chi}(\calJ_G) = \chi(G)$.
\end{corollary}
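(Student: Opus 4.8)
The plan is to sandwich $\Hat{\chi}(\calJ_G)$ between two copies of $\chi(G)$, using an explicit strong colouring built from the standard basis for the upper bound, and the already-established identity $\chi(\calJ_G) = \chi(G)$ for the lower bound. This keeps the argument short and reuses the content of Theorem~\ref{thm: insensitive} rather than re-deriving it.

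For the upper bound $\Hat{\chi}(\calJ_G) \leq \chi(G)$, I would exhibit a strong $\chi(G)$-colouring carried by the standard basis $e_1,\ldots,e_n$. Since $\calJ_G = \Span\{E_{i,j} : i \sim_G j\}$ consists only of off-diagonal matrix units, every diagonal unit $E_{i,i}$ is orthogonal to $\calJ_G$, and for $i \neq j$ one has $E_{i,j} \perp \calJ_G$ exactly when $i \not\sim_G j$. Now fix a proper colouring of $G$ with $\chi(G)$ colours and let $P_1,\ldots,P_{\chi(G)}$ be its colour classes. For any $\ell$ and any indices $i,j \in P_\ell$, including the case $i=j$, the vertices $i$ and $j$ are non-adjacent in $G$, so $e_i e_j^* = E_{i,j} \perp \calJ_G$; hence $\{e_i : i \in P_\ell\}$ is a strong independent set for $\calJ_G$. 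This partitions the standard basis into $\chi(G)$ strong independent sets, so $\calJ_G$ has a strong $\chi(G)$-colouring and $\Hat{\chi}(\calJ_G) \leq \chi(G)$.

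For the lower bound I would note that every strong independent set is in particular an ordinary independent set, because demanding $v_iv_j^* \perp \calJ$ for all $i,j$ is stronger than demanding it only for $i \neq j$. Consequently a strong $k$-colouring is an ordinary $k$-colouring, so $\chi(\calJ_G) \leq \Hat{\chi}(\calJ_G)$, and Theorem~\ref{thm: insensitive} gives $\chi(\calJ_G) = \chi(G)$. Combining the two inequalities yields $\chi(G) \leq \Hat{\chi}(\calJ_G) \leq \chi(G)$, hence equality.

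I do not expect a genuine obstacle here: the only routine check is that the standard basis is an admissible choice, which is precisely the orthogonality of the diagonal units to $\calJ_G$ noted above. The real work is inherited from Theorem~\ref{thm: insensitive}, whose proof handles arbitrary bases via the permutation supplied by Lemma~\ref{lemma: Vern, 7.28}. Alternatively, one could run the argument entirely through Theorem~\ref{thm: chromatic to graph}: the standard basis lies in $\calB_{\calJ_G}$ and satisfies $\cl{G_e} = G$, giving $\Hat{\chi}(\calJ_G) = \inf_{v} \chi(\cl{G_v}) \leq \chi(G)$, and the matching lower bound would then require showing $\chi(\cl{G_v}) \geq \chi(G)$ for every admissible $v$ — this is the one place where a permutation argument in the spirit of Lemma~\ref{lemma: Vern, 7.28} would be needed, which is exactly why the first route via Theorem~\ref{thm: insensitive} is preferable.
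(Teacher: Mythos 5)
Your proof is correct, but your lower bound takes a genuinely different route from the paper's. For the upper bound both arguments are the same in substance: you exhibit the standard basis partitioned by the colour classes of a proper colouring of $G$ as a strong $\chi(G)$-colouring, while the paper invokes Theorem~\ref{thm: chromatic to graph} with $v = (e_1,\ldots,e_n)$ and the observation $\cl{G_v} = G$ --- your explicit check that the diagonal units $E_{i,i}$ are orthogonal to $\calJ_G$ is exactly the admissibility condition $v \in \calB_{\calJ_G}$ there. For the lower bound, however, the paper argues $\chi(G) = \chi_{St}(\calJ_G) \leq \Hat{\chi}(\calJ_G)$ via Theorem~\ref{thm: clchi <chi_St}, which routes through Stahlke's homomorphism-monotone chromatic number and silently imports the fact that $\chi_{St}(\calJ_G) = \chi(G)$ from \cite{St}. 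You instead observe that every strong independent set is in particular an independent set (the condition on all pairs $i,j$ implies the condition on pairs $i \neq j$), so $\chi(\calJ) \leq \Hat{\chi}(\calJ)$ for any $\calJ$, and then apply Theorem~\ref{thm: insensitive}. Your route is more elementary and entirely self-contained within the paper, avoiding the graph-homomorphism machinery altogether; the paper's route buys a tighter chain of comparisons ($\chi_{St}$ sits between $\chi(G)$ and $\Hat{\chi}(\calJ_G)$), which is thematically useful for the surrounding section but is not needed for the corollary itself. Both arguments are valid.
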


\begin{proof}
  By Theorem~\ref{thm: chromatic to graph}, $\Hat{\chi}(\calJ_G) \leq \chi(\cl{G_v})$ where $v= (e_1,\ldots, e_n)$. The complement of the graph $G_v$ is the graph $G$. This gets us the bound $\Hat{\chi}(\calJ_G) \leq \chi(G)$. As well, by Theorem~\ref{thm: clchi <chi_St}, $\chi(G) = \chi_{St}(\calJ_G) \leq \Hat{\chi}(\calJ_G)$.
\end{proof}

Using the strong chromatic number we are easily able to generalize other graph inequalities that for now remain unanswered for $\chi_{St}$. In \cite{St} Stahlke asks if one can show $\chi_{St}(\calJ) \omega(\cl{\calJ}) \geq n$ for all submatricial traceless self-adjoint operator spaces $\calJ \subset M_n$. The question is motivated by the simple graph inequality $\chi(G)\omega(\cl{G}) \geq n$. Indeed for $\calJ \subset M_n$ a submatricial traceless self-adjoint operator space if we suppose $\widehat{\chi}(\calJ) =k$ then we can find an orthonormal basis $v=(v_1, \dots, v_n)$ and a partition of $v$ into independent sets $P_1, \dots, P_k$. By definition of $\omega(\calJ^\perp)$ we know that $|P_i| \leq \omega(\calJ^\perp)$ for $i =1, \dots, k$. Thus we have $n = \sum_{i} |P_i| \leq \sum_{i} \omega(\calJ^\perp) = \widehat{\chi}(\calJ)\omega(\calJ^\perp)$.

Using \cite{St}, one can establish that $\cl{\vth}(\calJ) \leq \chi_{St}(\calJ)$ for any submatricial traceless self-adjoint operator space $\calJ\subset M_n$: if $c= \chi_{St}(\calJ)$, then there is a graph homomorphism $\calJ \to \calJ_{K_c}$. In \cite[Theorem 19]{St}, they establish that $\cl{\vth}_n$ is monotonic under graph homomorphisms. We therefore get the inequality
\begin{align*}
  \cl{\vth}_n(\calJ) \leq \cl{\vth}_n(\calJ_{K_c}) = \vth(\cl{K_c}) \leq \chi(K_c) = c\;.
\end{align*}

We can now establish a Lov\'{a}sz sandwich Theorem for $\widehat{\chi}$.

\begin{theorem}\label{Generalized Lovasz Inequality}
  Let $S$ be a submatricial operator system. For any $d \geq 1$, we have the inequalities
  \begin{align*}
    \alpha_d(S) \leq \vth_d(S) \leq \widehat{\chi}_d(S^\perp)\;.
  \end{align*}
\end{theorem}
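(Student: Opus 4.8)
The plan is to reduce the statement to the case $d = 1$ and then prove the two inequalities separately, with the second carrying the real content. For the reduction, I would first observe that $M_d(S) = M_d \otimes S$ is again a submatricial operator system inside $M_{dn}$ (it contains $I_d \otimes I_n$ and is $*$-closed), and that its orthogonal complement is
\begin{align*}
  (M_d(S))^\perp = M_d \otimes S^\perp = M_d(S^\perp),
\end{align*}
which follows since $M_d \otimes S^\perp \subseteq (M_d \otimes S)^\perp$ and the two spaces have the same dimension $d^2(n^2 - \dim S)$. As $\alpha_d(S) = \alpha(M_d(S))$, $\vth_d(S) = \vth(M_d(S))$, and $\widehat{\chi}_d(S^\perp) = \widehat{\chi}(M_d(S^\perp)) = \widehat{\chi}((M_d(S))^\perp)$, the degree-$d$ chain is exactly the degree-$1$ chain applied to the operator system $M_d(S)$. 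It therefore suffices to prove $\alpha(S) \leq \vth(S) \leq \widehat{\chi}(S^\perp)$ for an arbitrary submatricial operator system $S \subseteq M_n$.

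The first inequality $\alpha(S) \leq \vth(S)$ is the generalization of $\alpha(G) \leq \vth(G)$ due to Duan, Severini and Winter, so I would cite \cite{DSW}; it also admits a short direct argument. Given an independent set $v_1,\ldots, v_k$ for $S$, set $u = \sum_{i=1}^k v_i$ and $T = uu^* - \sum_{i=1}^k v_iv_i^*$. Each $v_iv_j^*$ with $i \neq j$ is orthogonal to $S$, so $T \perp S$; and writing $P = \sum_i v_iv_i^*$ one checks that $I + T = (I - P) + uu^*$ is positive semidefinite with eigenvalue $\|u\|^2 = k$ on $\C u$, so $\|I+T\| = k$ and hence $\vth(S) \geq k = \alpha(S)$.

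The substance is the second inequality $\vth(S) \leq \widehat{\chi}(S^\perp)$. Set $k = \widehat{\chi}(S^\perp)$ and fix a strong $k$-colouring of $S^\perp$: an orthonormal basis partitioned into strong independent sets $P_1,\ldots, P_k$, with $Q_\ell$ the orthogonal projection onto $\Span P_\ell$, so $\sum_{\ell=1}^k Q_\ell = I$. The crucial use of ``strong'' is that for vectors $v_i, v_j$ in a common class $P_\ell$, \emph{including} $i = j$, we have $v_iv_j^* \perp S^\perp$, i.e. $v_iv_j^* \in (S^\perp)^\perp = S$. Now let $T$ be any competitor in the definition of $\vth(S)$ and put $A = I + T \geq 0$. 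Since $T \perp S$ and $I \in S$, orthogonality gives $\langle A, v_iv_j^* \rangle = \langle I, v_iv_j^* \rangle = \delta_{ij}$ for all $v_i, v_j$ in a common class, which is precisely the statement $Q_\ell A Q_\ell = Q_\ell$ for every $\ell$.

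It remains to bound $\|A\|$ given that $A \geq 0$ and $Q_\ell A Q_\ell = Q_\ell$ for projections summing to $I$; this is the step I expect to be the main obstacle, though it is elementary once set up. Writing $A = C^*C$ (say $C = A^{1/2}$), for any unit vector $x$ we have $\|C Q_\ell x\|^2 = \langle Q_\ell A Q_\ell x, x\rangle = \|Q_\ell x\|^2$, so by two applications of Cauchy--Schwarz,
\begin{align*}
  \langle Ax, x \rangle = \Big\| \sum_{\ell=1}^k C Q_\ell x \Big\|^2 \leq \Big( \sum_{\ell=1}^k \|Q_\ell x\| \Big)^2 \leq k \sum_{\ell=1}^k \|Q_\ell x\|^2 = k.
\end{align*}
As $A \geq 0$ this gives $\|I + T\| = \|A\| \leq k$, and taking the supremum over admissible $T$ yields $\vth(S) \leq k = \widehat{\chi}(S^\perp)$. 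I would stress that the diagonal condition $v_iv_i^* \in S$ built into the \emph{strong} colouring is exactly what forces each block $Q_\ell A Q_\ell$ to equal $Q_\ell$ rather than merely be diagonal; without it the norm bound fails, consistent with the earlier example showing the ordinary chromatic number cannot sandwich $\vth$.
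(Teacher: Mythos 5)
Your proposal is correct, and while the skeleton (reduce to $d=1$, cite \cite{DSW} for the first inequality, work for the second) matches the paper, your proof of the key inequality $\vth(S) \leq \widehat{\chi}(S^\perp)$ takes a genuinely different route. The paper passes to the distinguishability graph $G_v$ of the colouring basis, notes that the strong colouring forces $U S_{G_v} U^* \subset S$ (the diagonal condition $v_iv_i^* \in S$ is what puts the $E_{i,i}$ inside), and then invokes reverse monotonicity and unitary invariance of $\vth$ together with the \emph{classical} Lov\'{a}sz sandwich theorem: $\vth(S) \leq \vth(G_v) \leq \chi(\cl{G_v}) = \widehat{\chi}(S^\perp)$. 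You instead argue directly at the operator level: the strong colouring yields projections $Q_\ell$ summing to $I$ with $Q_\ell (I+T) Q_\ell = Q_\ell$ for every admissible $T$, and the Cauchy--Schwarz computation with $C = (I+T)^{1/2}$ gives $\|I+T\| \leq k$. Your argument is self-contained --- it in effect reproves Lov\'{a}sz's classical bound in the operator-system setting rather than quoting it --- and it isolates exactly where the ``strong'' diagonal condition enters ($Q_\ell A Q_\ell = Q_\ell$ rather than merely diagonal), which is the same point the paper's example about $\chi(S^\perp)$ is making. The paper's route is shorter given the machinery of Theorem~\ref{thm: Non-commuatative parameters from graph paramaters} already in place. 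Two further remarks: your explicit verification that $(M_d(S))^\perp = M_d(S^\perp)$ fills in a step the paper leaves implicit in ``it suffices to check for $d=1$''; and strictly speaking the second inequality is vacuous when $\widehat{\chi}(S^\perp) = \infty$, which your argument (like the paper's) implicitly assumes away by fixing a strong $k$-colouring --- worth a half-sentence but not a gap.
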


\begin{proof}
  It suffices to check for $d = 1$. The inequality $\alpha(S) \leq \vth(S)$ is a result in \cite[Lemma 7]{DSW} so we will only prove the other inequality. Let $v = (v_1,\ldots, v_n)$ be an orthonormal basis that can be partitioned into $k$ strong independent sets for $S^\perp$. Then consider the graph $G_v(S^\perp) $ as defined in Theorem~\ref{thm: Non-commuatative parameters from graph paramaters}. We have $\widehat{\chi}(S^\perp) = \chi(\cl{G_v})$. There exists a unitary $U \in M_n$ such that we get the the inclusion $S \supset US_{G_v}U^*$. Since $\vth$ is reverse monotonic under inclusion and invariant under conjugation by a unitary, we establish the inequalities
  \begin{align*}
    \vth(S) \leq \vth(S_{G_v}) = \vth(G_v) \leq \chi(\cl{G_v})= \widehat{\chi}(S^\perp).
  \end{align*}
\end{proof}

Similarly for any $d \geq 1$ we get the follow inequality for any submatricial traceless self-adjoint operator space $\calJ$.
\begin{align*}
\alpha_d(\calJ^\perp) \leq \cl{\vth}_d(\calJ) \leq \widehat{\chi}_d(\calJ).
\end{align*}

\subsection{The minimal chromatic number}
In this section, we wish to construct a concrete definition of a homomorphism monotone chromatic number.

\begin{definition}
  Let $\calJ \subset M_n$ be a submatricial traceless self-adjoint operator space. Define the minimal chromatic number of $\calJ$, denoted $\chi_0(\calJ)$, to be the least integer $c$ for which there exists a basis $v_1,\ldots, v_n$ of $\C^n$ and a partition $P_1,\ldots, P_c$ of $[n]$ for which whenever $i,j \in P_s$, we have the relation $v_iv_j^* \perp \calJ$.
\end{definition}

This parameter also agrees with the chromatic number for graphs.

\begin{proposition}
  Let $G$ be a finite graph. We have the relation $\chi(G) = \chi_0(\calJ_G)$.
\end{proposition}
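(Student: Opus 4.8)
The plan is to prove the equality by establishing the two inequalities $\chi_0(\calJ_G) \leq \chi(G)$ and $\chi(G) \leq \chi_0(\calJ_G)$ separately, along the same lines as the $\chi$-argument in Theorem~\ref{thm: insensitive}. The one new wrinkle, compared with $\widehat{\chi}$, is that the basis witnessing $\chi_0$ is not assumed orthonormal; the saving observation is that Lemma~\ref{lemma: Vern, 7.28} needs only linear independence, so it applies unchanged.

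For the upper bound $\chi_0(\calJ_G) \leq \chi(G)$, I would start from a proper colouring of $G$ with $\chi(G)$ colours, take the colour classes as the partition $P_1, \dots, P_c$ of $[n]$, and use the standard basis $v_i = e_i$. Since $\calJ_G = \Span\{E_{k,l} : k \sim_G l\}$ consists entirely of off-diagonal matrices, the direct computation $\langle e_i e_j^*, E_{k,l}\rangle = \delta_{i,k}\delta_{j,l}$ shows that $e_i e_j^* = E_{i,j}$ is orthogonal to $\calJ_G$ precisely when $i \not\sim_G j$, while $E_{i,i} \perp \calJ_G$ always holds. As each $P_s$ is independent in $G$, every pair $i,j \in P_s$ (including $i=j$) satisfies $v_i v_j^* \perp \calJ_G$, so this data is admissible for $\chi_0$ and yields $\chi_0(\calJ_G) \leq \chi(G)$.

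The reverse inequality $\chi(G) \leq \chi_0(\calJ_G)$ is the substantive direction. Suppose $c = \chi_0(\calJ_G)$ is realised by a basis $v_1, \dots, v_n$ and a partition $P_1, \dots, P_c$ with $v_i v_j^* \perp \calJ_G$ for all $i,j \in P_s$. Applying Lemma~\ref{lemma: Vern, 7.28}, I obtain a permutation $\sigma$ of $[n]$ for which the $\sigma(i)$th component of $v_i$ is nonzero for every $i$. I then colour the vertex $\sigma(i)$ of $G$ by the index $s$ of the part $P_s$ containing $i$; since $\sigma$ is a bijection, this is a well-defined $c$-colouring of $[n]$. To see it is proper, suppose $\sigma(i)$ and $\sigma(j)$ receive the same colour, so that $i,j$ lie in a common $P_s$ and hence $v_i v_j^* \perp \calJ_G$. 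The computation $\langle v_i v_j^*, E_{\sigma(i), \sigma(j)}\rangle = (v_i)_{\sigma(i)}\, \overline{(v_j)_{\sigma(j)}}$, combined with the nonvanishing of both factors guaranteed by the permutation lemma, forces this pairing to be nonzero; therefore $E_{\sigma(i),\sigma(j)} \notin \calJ_G$, i.e. $\sigma(i) \not\sim_G \sigma(j)$. Thus distinct vertices of the same colour are non-adjacent, the colouring is proper, and $\chi(G) \leq \chi_0(\calJ_G)$.

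I expect the only real obstacle to sit in this second direction, and it is essentially bookkeeping: confirming that the permutation lemma, stated for a genuine basis, still does its job when the $v_i$ are not orthonormal, and tracking the conjugate in the Hilbert--Schmidt pairing so that the nonzero product $(v_i)_{\sigma(i)}\overline{(v_j)_{\sigma(j)}}$ emerges. Everything else is parallel to the proof of Theorem~\ref{thm: insensitive}.
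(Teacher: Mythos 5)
Your proposal is correct and follows essentially the same route as the paper: the easy direction via the standard basis and colour classes, and the substantive direction via Lemma~\ref{lemma: Vern, 7.28} and the Hilbert--Schmidt pairing $\langle v_iv_j^*, E_{\sigma(i),\sigma(j)}\rangle = (v_i)_{\sigma(i)}\overline{(v_j)_{\sigma(j)}}$. The only cosmetic difference is that the paper conjugates by the permutation matrix to reduce to $\sigma = \mathrm{id}$, while you carry $\sigma$ through explicitly; both are fine.
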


\begin{proof}
  Since $\chi_0(\calJ_G) \leq \chi(G)$, it suffices to show that $\chi(G) \leq \chi_0(\calJ_G)$. For this proof, let $c$ be minimal and let $v_1,\ldots, v_n$ be a basis in $\C^n$ for which there is a partition $P_1,\ldots, P_c$ of $[n]$ such that whenever $i,j$ in $P_s$, $v_iv_j^* \perp \calJ_G$. We then have a permutation $\sigma$ of $[n]$ for which $\Angle{v_i, e_{\sigma(i)}}$ is non-zero. By conjugating $\calJ_G$ by the permutation matrix defined by $\sigma$, assume that $\sigma(i) = i$ for all $i$. Define the $c$-colouring $f: V(G) \to [c]$ By $f(i) = s$ for $s$ such that $i \in P_s$. To see that this is a colouring, suppose not. There are then $i \sim j$ for which $i,j \in P_s$ for some $s$. By definition then we have, $E_{i,j}$ belongs to $\calJ_G$. We observe then, 
  \begin{align*}
    \Angle{v_iv_j^*, E_{i,j}} = \tr(v_jv_i^*e_ie_j^*) = \Angle{v_i,e_i}\Angle{v_j,e_j} \neq 0\;.
  \end{align*}
  This is contradicts the fact that $v_iv_j^* \in \calJ^\perp$.
\end{proof}

We recall the following result, which arises as a consequence of the Stinespring dilation Theorem (see \cite[Definition 7]{St}).

\begin{lemma}\label{lemma: graph homomorphism}
  Let $\calJ \subset M_n$ and $\calK \subset M_m$ be submatricial traceless self-adjoint operator spaces. There is a graph homomorphism $\calJ \to \calK$ if and only if there is a $d \geq 1$ and an isometry $E : \C^{n} \to \C^{m} \otimes \C^d$ for which $E \calJ E^* \subset M_d(\calK)$.
\end{lemma}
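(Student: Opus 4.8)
The plan is to translate both sides of the equivalence into statements about a single finite family of operators $E_1,\ldots,E_d \colon \C^n \to \C^m$ and then observe that the two conditions coincide block-by-block. The bridge is the finite-dimensional Stinespring/Kraus correspondence between completely positive trace preserving maps and isometries, exactly as recorded in \cite{St}: given Kraus operators $E_1,\ldots,E_d$ of a map $\calE\colon M_n \to M_m$, define $E\colon \C^n \to \C^m \otimes \C^d$ by $E\xi = \sum_{i=1}^d E_i\xi \otimes e_i$, where $(e_i)$ is the standard basis of $\C^d$. A direct computation gives $E^*E = \sum_i E_i^*E_i$, so $E$ is an isometry precisely when $\sum_i E_i^*E_i = I_n$, which is exactly the condition that $\calE(X) = \sum_i E_i X E_i^*$ be trace preserving. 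Conversely, any isometry $E\colon \C^n \to \C^m \otimes \C^d$ decomposes as $E\xi = \sum_i E_i\xi \otimes e_i$ for uniquely determined components $E_i\colon \C^n \to \C^m$, and the relation $E^*E = I_n$ returns the trace preserving condition.

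The key computation is the identification of $E\calJ E^*$. For $A \in \calJ$ one finds
\begin{align*}
  EAE^* = \sum_{i,j=1}^d E_i A E_j^* \otimes e_i e_j^*,
\end{align*}
so that, reading $M_m \otimes M_d \cong M_d(M_m)$ with the $\C^d$ factor recording the block indices, the $(i,j)$ block of $EAE^*$ is precisely $E_i A E_j^*$. Hence $EAE^* \in M_d(\calK)$ if and only if $E_i A E_j^* \in \calK$ for every pair $i,j$, and letting $A$ range over $\calJ$ gives $E\calJ E^* \subseteq M_d(\calK)$ iff $E_i \calJ E_j^* \subseteq \calK$ for all $i,j$. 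Combining this with the previous paragraph yields both implications at once: a graph homomorphism $\calJ \to \calK$ supplies Kraus operators with $E_i\calJ E_j^* \subseteq \calK$, hence (taking $d$ to be the number of Kraus operators) an isometry with $E\calJ E^* \subseteq M_d(\calK)$; and conversely such an isometry yields, via its components, a trace preserving map whose Kraus operators satisfy $E_i\calJ E_j^* \subseteq \calK$, which is a graph homomorphism.

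Two routine points will need attention. First, I would note that complete positivity of $X \mapsto \sum_i E_i X E_i^*$ is automatic from the Kraus form, so the only substantive half of the cptp requirement is trace preservation, which the isometry relation supplies. Second, the condition $E_i\calJ E_j^* \subseteq \calK$ is independent of the chosen Kraus decomposition: any two decompositions of the same map are related by $F_k = \sum_i u_{ki}E_i$ for suitable scalars $u_{ki}$, and since $\calK$ is a linear subspace, $F_k \calJ F_\ell^* = \sum_{i,j} u_{ki}\overline{u_{\ell j}}\,E_i\calJ E_j^* \subseteq \calK$. This guarantees the notion is well defined and that the back-and-forth translation is consistent.

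I expect the main obstacle to be purely notational rather than conceptual: pinning down the tensor-factor convention so that $\sum_{i,j} E_i A E_j^* \otimes e_i e_j^*$ is correctly read as an element of $M_d(\calK) \subseteq M_d(M_m)$ matching the statement, and using the same convention in both directions. Once that bookkeeping is fixed, everything reduces to the Stinespring dilation theorem together with the elementary identity $E^*E = \sum_i E_i^*E_i$.
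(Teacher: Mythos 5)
Your proof is correct and is exactly the argument the paper has in mind: the paper states this lemma without proof, citing Stahlke and the Stinespring dilation theorem, and your Kraus-operator/isometry translation with the block identity $EAE^* = \sum_{i,j} E_i A E_j^* \otimes e_i e_j^*$ is the standard way that citation unwinds. The well-definedness remark about unitary freedom in the Kraus decomposition is a nice extra, though not strictly needed since the definition of graph homomorphism only requires the condition for some choice of Kraus operators.
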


We use this equivalent characterization to show that $\chi_{0,cb}$ is monotonic under graph homomorphisms.

\begin{theorem}\label{theorem: chi_0 monotone}
  Let $\calJ \subset M_n$ and $\calK \subset M_m$ be submatricial traceless self-adjoint operator spaces. If there is a graph homomorphism $\phi: \calJ \to \calK$ with $d$ associated Kraus operators, then we have the inequality
  \begin{align*}
    \chi_0(\calJ) \leq \chi_0(M_d \otimes \calK)\;.
  \end{align*}
  In particular, $\chi_{0,cb}(\calJ) \leq \chi_{0,cb}(\calK)$.
\end{theorem}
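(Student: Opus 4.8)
The plan is to transport a minimal colouring of $M_d\otimes\calK$ backwards along the Stinespring isometry of $\phi$ and then to carve a genuine basis of $\C^n$ out of the transported vectors; the freedom to use non-orthonormal bases in the definition of $\chi_0$ is exactly what makes the second step possible. First I would realise $\phi$ concretely: writing $E_1,\dots,E_d$ for its Kraus operators, set $E\colon\C^n\to\C^d\otimes\C^m$, $Ex=\sum_i e_i\otimes E_ix$, i.e.\ $E=\sum_i e_i\otimes E_i$. Trace preservation gives $E^*E=\sum_i E_i^*E_i=I_n$, so $E$ is an isometry, and the homomorphism condition $E_i\calJ E_j^*\subset\calK$ gives
\[
EAE^*=\sum_{i,j}E_{i,j}\otimes E_iAE_j^*\in M_d\otimes\calK\qquad(A\in\calJ),
\]
that is $E\calJ E^*\subset M_d\otimes\calK=M_d(\calK)$ (cf.\ the isometry in Lemma~\ref{lemma: graph homomorphism}).

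Next, set $c=\chi_0(M_d\otimes\calK)$ and fix a witnessing basis $w_1,\dots,w_{dm}$ of $\C^d\otimes\C^m$ with a partition $Q_1,\dots,Q_c$ of $[dm]$ such that $w_kw_l^*\perp M_d\otimes\calK$ whenever $k,l\in Q_s$. Put $u_k:=E^*w_k\in\C^n$. The crux is the identity
\[
\Angle{u_ku_l^*,A}=\tr(A^*E^*w_kw_l^*E)=\tr(EA^*E^*w_kw_l^*)=\Angle{w_kw_l^*,EAE^*}
\]
valid for all $A\in\calJ$. Since $EAE^*\in M_d\otimes\calK$ and $w_kw_l^*$ is orthogonal to that space whenever $k,l$ share a colour, this shows $u_ku_l^*\perp\calJ$ for all $k,l\in Q_s$ (the case $k=l$ included).

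The main obstacle is that the $dm$ vectors $u_k$ overshoot a basis of $\C^n$. Here I would use that $E$ is an isometry, so $E^*$ is surjective and the family $\{u_k\}$ spans $\C^n$; hence there is a subset $T\subset[dm]$ with $|T|=n$ for which $\{u_k:k\in T\}$ is a basis of $\C^n$. Restricting the partition to $R_s:=Q_s\cap T$ produces at most $c$ nonempty colour classes, and by the previous paragraph $u_ku_l^*\perp\calJ$ whenever $k,l$ lie in a common $R_s$. Thus $(u_k)_{k\in T}$ together with $(R_s)$ witnesses $\chi_0(\calJ)\le c=\chi_0(M_d\otimes\calK)$, giving the first inequality. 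It is precisely the ability to discard the superfluous $u_k$ and keep an honest (non-orthonormal) basis that the definition of $\chi_0$ accommodates, and this is why the statement is phrased for $\chi_0$ rather than $\widehat{\chi}$.

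For the completely bounded statement, where $\chi_{0,d'}(\calJ)=\chi_0(M_{d'}\otimes\calJ)$ and $\chi_{0,cb}=\inf_{d'\ge1}\chi_{0,d'}$, I would first record the monotonicity $\chi_0(M_e\otimes\calL)\le\chi_0(\calL)$ for every such space $\calL\subset M_m$ and every $e\ge1$: given a colouring $(v_i,P_s)$ of $\calL$, colour the basis $\{e_a\otimes v_i\}$ of $\C^e\otimes\C^m$ by the colour of $v_i$ and note $(e_a\otimes v_i)(e_b\otimes v_j)^*=e_ae_b^*\otimes v_iv_j^*\perp M_e\otimes\calL$ as soon as $v_iv_j^*\perp\calL$. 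Now for each $d'\ge1$ the map $\id_{M_{d'}}\otimes\phi$ is a graph homomorphism $M_{d'}\otimes\calJ\to M_{d'}\otimes\calK$ with the $d$ Kraus operators $I_{d'}\otimes E_i$, so the first inequality followed by this monotonicity (applied with $\calL=M_{d'}\otimes\calK$ and $e=d$) yields
\[
\chi_{0,d'}(\calJ)\le\chi_0\big(M_d\otimes(M_{d'}\otimes\calK)\big)\le\chi_0(M_{d'}\otimes\calK)=\chi_{0,d'}(\calK).
\]
Taking the infimum over $d'$ gives $\chi_{0,cb}(\calJ)\le\chi_{0,cb}(\calK)$.
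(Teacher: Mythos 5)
Your proof is correct and follows the same core strategy as the paper's: realize the homomorphism as conjugation by the Stinespring isometry $E$ (Lemma~\ref{lemma: graph homomorphism}), pull the witnessing basis of $\C^d\otimes\C^m$ back along $E^*$, and use the identity $\Angle{E^*w_k(E^*w_l)^*,A}=\Angle{w_kw_l^*,EAE^*}$ to transport the orthogonality relations. Where you differ is in the two finishing steps, and in both places your version is the cleaner one. To extract a basis of $\C^n$ from the spanning family $(E^*w_k)$ you simply select a linearly independent subfamily and restrict the partition to it; the paper instead builds subspaces $V_1=\Span C_1$ and $V_i=\Span\{v\in\Span C_i:v\notin\sum_{k<i}V_k\}$ and asserts these are linearly independent, which as written is not justified (a linear combination of vectors lying outside a subspace can land inside it), though it is repairable by taking complements, at which point it reduces to your subset selection. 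For the completely bounded statement, the paper tensors with $M_r$ and writes a chain of inequalities that implicitly needs the monotonicity $\chi_0(M_e\otimes\calL)\le\chi_0(\calL)$; you state and prove exactly this lemma and then take the infimum over $d'$, which makes that step airtight. So: same route, with a more elementary basis-extraction and an explicitly supplied monotonicity lemma that the paper's completely bounded argument tacitly relies on.
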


\begin{proof}
  Suppose that $P_1,\ldots, P_c$ is a partition of the set $[d \times m]$ and $(w_{i}: i \in [d \times m])$ is a basis for which whenever $i,j$ are in the same partition $P_s$, then $w_iw_j^* \perp M_d \otimes \calK$. By lemma~\ref{lemma: graph homomorphism}, there is an isometry $E$ for which the map $\phi: M_n \to M_d \otimes M_m : X \mapsto EXE^*$ sends $\calJ$ to $M_d \otimes \calK$. Consider the set $\{E^*w_i : i \in [d \times m]\}$. This set spans $\C^n$. To see this, for any $v \in \C^n$, since $Ev \in \C^d \otimes \C^m$, there are some $\lambda_i$ for which $Ev = \sum_i \lambda_i w_i$. Multiplying on the left by $E^*$ tell us that $v$ is spanned by the $E^*w_i$. If $i,j$ belong to the same partition $P_s$, then for any $X \in \calJ$,
  \begin{align*}
    \Angle{E^*w_i(E^*w_j)^*, X} = \Angle{w_iw_j^*, EXE^*} = 0\;.
  \end{align*}

  For each $i \in [c]$, let $C_i = \{E^*w_j : j \in P_i \}$. We will define a sequence of linearly independent subspaces $V_1,\ldots, V_c$ for which $\sum_{i=1}^c V_i = \C^n$ inductively. For the base case, set $V_1 = \Span C_1$. For $i > 1$, let
  \begin{align*}
     V_i = \Span \left\{v \in \Span C_i:  v \not\in \sum_{k < i} V_k \right\}\;.
  \end{align*}
  By construction, the $V_i$ are linearly independent and $\sum_i V_i = \C^n$. For each $s$, let $Q_s = \{v_{s,1},\ldots, v_{s,d_s}\}$ be a basis in $V_s$, where $d_s = dim(V_s)$. Since these are a linear combination of the $C_s$, we get that whenever, $i,j \in [d_s]$, given any $X \in \calJ$,
  \begin{align*}
      \Angle{v_{s,i}v_{s,j}^*, X} = 0\;.
  \end{align*}
  The vectors $\{v_{s,i}: s \in [c], i \in [d_s]\}$ then form a basis for $\C^n$ and are partitioned by the sets $\{Q_s: s \in [c]\}$. This proves that $\chi_0(\calJ) \leq \chi_0(M_d \otimes \calK)$. If $r \geq 1$ and $E$ is an isometry for which the map $\phi: M_n \to M_d \otimes M_m : X \mapsto EXE^*$ sends $\calJ$ to $M_d(\calK)$, then the map
  \begin{align*}
      1 \otimes \phi: M_r \otimes M_n \to M_{r +d}\otimes M_m : X \otimes Y \mapsto X \otimes \phi(Y)
  \end{align*}
  is a map implemented by conjugation by the isometry $1 \otimes E$. By lemma~\ref{lemma: graph homomorphism}, $1 \otimes E$ is a graph homomorphism $M_r(\calJ) \to M_r(\calK)$. By the above proof, we get the bound $\chi_0(M_r(\calJ)) \leq \chi_0(M_{r+d}(\calK)) \leq \chi_{0,cb}(\calK)$ for every $r \geq 1$. This establishes the inequality
  \begin{align*}
    \chi_{0,cb}(\calJ) \leq \chi_{0,cb}(\calK)\;.
  \end{align*}
\end{proof}

  \begin{corollary}\label{cor: chi_0 < chi_St}
    Let $\calJ$ be a submatricial traceless self-adjoint operator space. We have the inequality
    \begin{align*}
      \chi_{0,cb}(\calJ) \leq \chi_{St}(\calJ)\;.
    \end{align*}
  \end{corollary}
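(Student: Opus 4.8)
The plan is to derive the corollary directly from the completely bounded monotonicity of $\chi_0$ established in Theorem~\ref{theorem: chi_0 monotone}, together with the fact that $\chi_0$ faithfully computes the chromatic number on the operator spaces attached to complete graphs. If $\chi_{St}(\calJ) = \infty$ there is nothing to prove, so I would set $c = \chi_{St}(\calJ) < \infty$. By the very definition of $\chi_{St}$, this means there is a graph homomorphism $\calJ \to \calJ_{K_c}$.

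Next I would feed this homomorphism into Theorem~\ref{theorem: chi_0 monotone}, which gives the inequality $\chi_{0,cb}(\calJ) \leq \chi_{0,cb}(\calJ_{K_c})$. It then remains only to bound the right-hand side. Since $\chi_{0,cb}$ is the infimum over $d \geq 1$ of $\chi_0(M_d(\cdot))$, the $d = 1$ term already furnishes $\chi_{0,cb}(\calJ_{K_c}) \leq \chi_0(\calJ_{K_c})$. Invoking the earlier identity $\chi_0(\calJ_G) = \chi(G)$ with $G = K_c$, together with the elementary graph fact $\chi(K_c) = c$, this yields $\chi_{0,cb}(\calJ_{K_c}) \leq c$. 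Chaining the two inequalities produces $\chi_{0,cb}(\calJ) \leq c = \chi_{St}(\calJ)$, as claimed.

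There is no serious obstacle here, as the substantive work has already been carried out in Theorem~\ref{theorem: chi_0 monotone}. The only point deserving a moment's care is the direction of the completely bounded passage: because $\chi_{0,cb}$ is an infimum (smaller chromatic numbers being better), one is entitled to estimate $\chi_{0,cb}(\calJ_{K_c})$ from above by evaluating a single value of $d$, and $d = 1$ is the convenient choice. One should also confirm that the map $\calJ \to \calJ_{K_c}$ supplied by the definition of $\chi_{St}$ is precisely the kind of graph homomorphism to which Theorem~\ref{theorem: chi_0 monotone} applies, which it is by construction.
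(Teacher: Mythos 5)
There is a genuine gap, and it sits exactly where you said ``a moment's care'' was needed: the direction of the completely bounded passage. You take $\chi_{0,cb}$ to be the \emph{infimum} over $d$ of $\chi_0(M_d(\cdot))$ and then bound $\chi_{0,cb}(\calJ_{K_c})$ from above by its $d=1$ term. But the paper (while admittedly never writing the definition down explicitly) must intend $\chi_{0,cb}(\calK)=\sup_d \chi_0(M_d(\calK))$: the proof of Theorem~\ref{theorem: chi_0 monotone}, which you rely on for the monotonicity step, concludes via the chain $\chi_0(M_r(\calJ)) \leq \chi_0(M_{rd}(\calK)) \leq \chi_{0,cb}(\calK)$ for all $r$, and the second inequality is false for an infimum. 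Under the supremum reading, your step
\begin{align*}
\chi_{0,cb}(\calJ_{K_c}) \leq \chi_0(\calJ_{K_c})
\end{align*}
goes the wrong way: a supremum cannot be bounded above by evaluating a single term. What you actually need is $\chi_0(M_d(\calJ_{K_c})) \leq c$ for \emph{every} $d\geq 1$, and that is the one non-trivial piece of content in this corollary.

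The paper supplies it as follows: $M_d(\calJ_G)$ is $\calJ_{G^{[d]}}$, where $G^{[d]}$ is the graph on $V(G)\times[d]$ with $(v,i)\sim(w,j)$ iff $v\sim_G w$; the projection $G^{[d]}\to G$ and the inclusion $G\to G^{[d]}$ are graph homomorphisms, so $\chi(G^{[d]})=\chi(G)$; hence by the proposition identifying $\chi_0(\calJ_H)$ with $\chi(H)$ one gets $\chi_{0,d}(\calJ_{K_c})=\chi(K_c^{[d]})=c$ for all $d$, and therefore $\chi_{0,cb}(\calJ_{K_c})=c$. Once you have that, your steps 1, 2 and the final chaining are correct and match the paper. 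So the repair is local but essential: replace your $d=1$ evaluation with the uniform-in-$d$ computation of $\chi_{0,d}(\calJ_{K_c})$.
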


  \begin{proof}
    We first show that $\chi_{0,d}(\calJ_G) = \chi(\calJ_G)$ for any $d \geq 1$ and any graph $G$. Let $G^{[d]}$ denote the graph on vertices $V(G) \times [d]$ for which $(v,i) \sim (w,j)$ if $v \sim w$ in $G$. The projection $G^{[d]} \to G: (v,i) \mapsto v$ and the inclusion $G \to G^{[d]} : v \mapsto (v,1)$ are graph homomorphisms. We therefore get by monotonicity of $\chi$ that $\chi(G) = \chi(G^{[d]})$. On the other hand, we know that $\chi_0(\calJ_G) = \chi(G) = \chi(G^{[d]}) = \chi_0(M_d(\calJ_G)) = \chi_{0,d}(\calJ_G)$. In particular, for any $c \geq 1$, $\chi_{0,cb}(\calJ_{K_c}) = \chi(K_c) = c$.
  \end{proof}

\section{Sabidussi's Theorem and Hedetniemi's conjecture } \label{Sabidussi and Hedet section}

 As an application of our new graph parameters, in this section, we generalize two results for chromatic numbers on graph products. For convenience we will let $\cl{\chi}(X)= \widehat{\chi}(X^\perp)$ for $X$ a submatricial traceless self-adjoint operator space or a submatricial operator system.
 \begin{definition}
   Let $G$ and $H$ be finite graphs.
   \begin{enumerate}
   \item Define the categorical product of $G$ and $H$ to be the graph $G \times H$ with vertex set $V(G) \times V(H)$ and edge relation given by $(v,a) \sim (w,b)$ if $v \sim_G w$ and $a \sim_H b$.

   \item Define the Cartesian product of $G$ and $H$ to be the graph $G \Box H$ with vertex set $V(G) \times V(H)$ and edge relation given by $(v,a) \sim (w,b)$ if one of the following holds
   \begin{enumerate}
    \item $v \sim_G w$ and $a =b$ or
    \item $v = w$ and $a \sim_H b$.
  \end{enumerate}

 \end{enumerate}
 \end{definition}

\subsection{Sabidussi's theorem}

We generalize the Theorem of Sabidussi.
\begin{theorem}[Sabidussi]
  Let $G$ and $H$ be finite graphs. We have the identity
  \begin{align*}
    \chi(G \Box H) = \max\{\chi(G), \chi(H)\}\;.
  \end{align*}
\end{theorem}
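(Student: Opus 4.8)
The plan is to prove the two inequalities separately: the lower bound $\chi(G \Box H) \geq \max\{\chi(G), \chi(H)\}$ comes from a subgraph embedding, and the upper bound comes from an explicit colouring assembled from proper colourings of the two factors.

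For the lower bound, I would first observe that $G$ sits inside $G \Box H$ as an induced subgraph. Fixing any vertex $a_0 \in V(H)$, the edge relation restricted to the fibre $V(G) \times \{a_0\}$ reduces to $(v,a_0) \sim (w,a_0)$ exactly when $v \sim_G w$, so this fibre is isomorphic to $G$. Since the chromatic number cannot increase when passing to a subgraph, $\chi(G \Box H) \geq \chi(G)$; by the symmetric argument, fixing a vertex of $G$ yields a copy of $H$ and gives $\chi(G \Box H) \geq \chi(H)$. Taking the maximum establishes one inequality.

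For the upper bound, set $k = \max\{\chi(G), \chi(H)\}$ and fix proper colourings $f : V(G) \to \{0,\ldots,\chi(G)-1\}$ and $g : V(H) \to \{0,\ldots,\chi(H)-1\}$, viewing both as taking values in $\Z_k$. I would then propose the colouring $c : V(G) \times V(H) \to \Z_k$ given by $c(v,a) = f(v) + g(a) \pmod k$, and check that it is proper. If $(v,a) \sim (w,b)$ because $v \sim_G w$ and $a = b$, then $c(v,a) - c(w,b) \equiv f(v) - f(w) \pmod k$; if instead $v = w$ and $a \sim_H b$, then $c(v,a) - c(w,b) \equiv g(a) - g(b) \pmod k$. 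In either case the properness of $f$ or of $g$ forces the relevant integer difference to be nonzero.

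The one step that needs care, and the only genuine obstacle, is that a nonzero integer difference need not remain nonzero modulo $k$. This is precisely why the colours are chosen to lie in the blocks $\{0,\ldots,\chi(G)-1\}$ and $\{0,\ldots,\chi(H)-1\}$: since $f(v)$ and $f(w)$ lie in a range of length $\chi(G) \leq k$, a nonzero difference satisfies $0 < |f(v)-f(w)| < k$ and so cannot vanish modulo $k$, and likewise for $g$. Hence $c$ separates the endpoints of every edge, giving a proper $k$-colouring and the bound $\chi(G \Box H) \leq k$. Combining the two inequalities yields the claimed identity.
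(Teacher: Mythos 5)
Your proof is correct, and it follows the same strategy the paper uses: the paper states the classical theorem without proof, but its non-commutative generalization establishes the lower bound via embeddings of the factors (homomorphism monotonicity) and the upper bound via exactly your colouring $h(i,j) = f(i) + g(j) \bmod c$, including the same observation that a nonzero difference of colours drawn from a block of length at most $c$ cannot vanish modulo $c$. No gaps.
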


The first step in generalizing this Theorem is to generalize the cartesian product.

\begin{definition}
  Let $\calJ \subset M_n$ and let $\calK \subset M_m$ be submatricial traceless self-adjoint operator spaces. Let $v \subset \C^n$ and $w \subset \C^m$ be bases. Define the cartesian produt of $\calJ$ and $\calK$ relative to $(v,w)$ as the submatricial traceless self-adjoint operator space
  \begin{align*}
    (\calJ \Box \calK)_{v,w} = \calJ \otimes \calD_{w} + \calD_{v} \otimes \calK
  \end{align*}
  where for a basis $x= (x_1,\ldots, x_n)$, $\calD_{x} = \Span\{x_ix_i^*: i \in [n] \}$.

  In the case when $e = (e_1,\ldots, e_n)$ and $f = (e_1,\ldots, e_m)$, we define the cartesian product $\calJ \Box \calK$ to be $(\calJ \Box \calK)_{e,f}$.
\end{definition}

\begin{lemma}
  Let $G$ and $H$ be finite graphs with $[n] = V(G)$ and $[m] = V(H)$. We have the identity $\calJ_G \Box \calJ_H = \calJ_{G \Box H}$.
\end{lemma}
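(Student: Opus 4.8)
The plan is to expand both sides in terms of matrix units under the standard identification $M_n \otimes M_m \cong M_{nm}$ that sends $E_{i,j} \otimes E_{a,b}$ to the matrix unit $E_{(i,a),(j,b)}$ indexed by the vertex set $[n] \times [m]$ of $G \Box H$, and then to check that the resulting spanning sets coincide.

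First I would record that, with respect to the standard bases $e = (e_1,\ldots,e_n)$ of $\C^n$ and $f = (e_1,\ldots,e_m)$ of $\C^m$, we have $\calD_e = \Span\{E_{i,i} : i \in [n]\}$ and $\calD_f = \Span\{E_{a,a} : a \in [m]\}$. Hence the two summands defining the Cartesian product unfold as
\[
  \calJ_G \otimes \calD_f = \Span\{E_{i,j} \otimes E_{a,a} : i \sim_G j,\ a \in [m]\}
\]
and
\[
  \calD_e \otimes \calJ_H = \Span\{E_{i,i} \otimes E_{a,b} : i \in [n],\ a \sim_H b\}\;.
\]

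Next I would expand the right-hand side. By definition $\calJ_{G\Box H}$ is spanned by the matrix units $E_{(i,a),(j,b)}$ for which $(i,a) \sim (j,b)$ in $G \Box H$, and the edge relation splits into exactly two cases: either $i \sim_G j$ with $a = b$, or $i = j$ with $a \sim_H b$. Under the identification above these two families of generators correspond precisely to the elements $E_{i,j} \otimes E_{a,a}$ with $i \sim_G j$ and to the elements $E_{i,i} \otimes E_{a,b}$ with $a \sim_H b$, which are exactly the generators of $\calJ_G \otimes \calD_f$ and of $\calD_e \otimes \calJ_H$ found above. Thus the two spanning sets agree and the identity follows.

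I do not expect any genuine obstacle here; the content is entirely bookkeeping. The only point requiring a moment of care is verifying that the two cases in the Cartesian product edge relation match the two tensor summands without overlap: since $i \sim_G j$ forces $i \neq j$ while $a = b$, and $i = j$ while $a \sim_H b$ forces $a \neq b$, the two families of generators are disjoint, so no generator is double counted and the correspondence is a genuine equality of spanning sets rather than merely an inclusion in one direction.
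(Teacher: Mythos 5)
Your proof is correct and follows essentially the same route as the paper's: expand both tensor summands of $\calJ_G \Box \calJ_H$ as spans of matrix units, unfold the edge relation of $G \Box H$ into its two cases, and observe that the resulting spanning sets coincide under the standard identification $M_n \otimes M_m \cong M_{nm}$. Your additional remark that the two families of generators are disjoint is a harmless extra observation; the paper simply compares the spanning sets directly.
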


\begin{proof}
  Observe that $\calJ_G \otimes \calD_m = \Span\{E_{v,w} \otimes E_{i,i}: v \sim_G w, i \in [m] \}$ and that $\calD_n \otimes \calJ_H = \Span\{E_{i,i} \otimes E_{v,w}: i \in [n], v \sim_H w \}$. Combining these, we get that $E_{i,j} \otimes E_{k,l} \in \calJ_G \Box \calJ_H$ if and only if $i \sim_G j$ and $k = l$ or $i = j$ and $k \sim_H l$. This is exactly what it means to be a member of $\calJ_{G \Box H}$.
\end{proof}

\begin{lemma}\label{lemma: Box homomorphism}
  Suppose that $\calJ \subset M_n$ and $\calK \subset M_m$ are submatricial traceless self-adjoint operator spaces. Suppose $v \subset \C^n$ and $w \subset \C^m$ are bases. There exist graph homomorphisms $\calJ \to \calJ \otimes \calD_w$ and $\calK \to \calD_v \otimes \calK$. In particular, there exist graph homomorphisms $\calJ \to (\calJ \Box \calK)_{v,w}$ and $\calK \to (\calJ \Box \calK)_{v,w}$.
\end{lemma}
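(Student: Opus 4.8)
The plan is to exhibit each of the two homomorphisms by a \emph{single} Kraus operator built from a normalized vector of the relevant basis, and then to deduce the two ``in particular'' statements by post-composing with an inclusion. For the homomorphism $\calJ \to \calJ \otimes \calD_w$, I would fix a basis vector $w_1$ from $w$, set $u = w_1/\|w_1\|$, and consider the map $\calE : M_n \to M_n \otimes M_m$ defined by $\calE(X) = (I_n \otimes u)\, X\, (I_n \otimes u)^*$; a direct computation gives $\calE(X) = X \otimes uu^*$.

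First I would check that $\calE$ is a genuine graph homomorphism. Its single Kraus operator $E = I_n \otimes u$ is an isometry because $u$ is a unit vector, so $E^*E = I_n \otimes u^*u = I_n$, whence $\calE$ is completely positive and trace preserving. Since there is only one Kraus operator, the defining condition $E_i \calJ E_j^* \subset \calK$ collapses to $E \calJ E^* \subset \calJ \otimes \calD_w$, and indeed $E \calJ E^* = \calJ \otimes uu^*$. Here $uu^* = w_1 w_1^*/\|w_1\|^2$ is a scalar multiple of $w_1 w_1^*$ and therefore lies in $\calD_w$, giving the required containment. (Equivalently, one may invoke Lemma~\ref{lemma: graph homomorphism} with $d = 1$ and the isometry $E$.) The homomorphism $\calK \to \calD_v \otimes \calK$ is obtained symmetrically from the map $Y \mapsto (u' \otimes I_m)\, Y\, (u' \otimes I_m)^* = u'u'^* \otimes Y$, where $u' = v_1/\|v_1\|$ is a normalized vector of $v$.

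For the remaining assertions, I would observe that $\calJ \otimes \calD_w$ is a summand of $(\calJ \Box \calK)_{v,w} = \calJ \otimes \calD_w + \calD_v \otimes \calK$, so the identity map is a graph homomorphism $\calJ \otimes \calD_w \to (\calJ \Box \calK)_{v,w}$: its single Kraus operator is the identity, which is an isometry, and $I(\calJ \otimes \calD_w)I^* \subset (\calJ \Box \calK)_{v,w}$. Composing this inclusion with the homomorphism $\calJ \to \calJ \otimes \calD_w$ above—using that a composite of graph homomorphisms is again one, since the Kraus operators $\{F_k E_i\}$ of the composite satisfy $(F_k E_i)\calJ(F_l E_j)^* = F_k(E_i \calJ E_j^*)F_l^*$—yields $\calJ \to (\calJ \Box \calK)_{v,w}$. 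The homomorphism $\calK \to (\calJ \Box \calK)_{v,w}$ follows in the same way from $\calK \to \calD_v \otimes \calK$.

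I expect no serious obstacle here. The only points requiring care are to normalize the chosen basis vector so that $uu^*$ actually lies in $\calD_w$ (the bases $v, w$ are not assumed orthonormal), and to confirm that $\calJ \otimes \calD_w$ and $(\calJ \Box \calK)_{v,w}$ are themselves submatricial traceless self-adjoint operator spaces, so that the targets of the claimed homomorphisms are well-defined; this holds because $\tr(A \otimes D) = \tr(A)\tr(D) = 0$ for $A \in \calJ$ and $D \in \calD_w$, and each factor is closed under the adjoint.
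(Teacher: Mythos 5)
Your proposal is correct and follows essentially the same route as the paper: the paper's map $\phi(X) = \frac{1}{\|w_1\|^2} X \otimes w_1w_1^*$ with Kraus operator $v \mapsto v \otimes w_1/\|w_1\|$ is exactly your $\calE(X) = X \otimes uu^*$ with $E = I_n \otimes u$, and the ``in particular'' claims are likewise obtained by composing with the inclusion into $(\calJ \Box \calK)_{v,w}$. Your added checks (normalization, closure of composites of graph homomorphisms, well-definedness of the target spaces) are sound elaborations of details the paper leaves implicit.
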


\begin{proof}
  Define $\phi: M_n  \to M_n \otimes M_m : X \mapsto \frac{1}{\|w_1\|^2}X \otimes w_1w_1^*$. This map has Kraus operator $E : \C^n \to \C^n \otimes \C^m : v \mapsto v \otimes w_1/\|w_1\|$. Since this Kraus operator is an isometry, we know that $\phi$ is cptp. As well, $\phi(\calJ) = \calJ \otimes w_1w_1^* \subset \calJ \otimes \calD_w$. Similarly, $\calK \to \calD_v \otimes \calK$. Since $\calJ \otimes \calD_w \subset (\calJ \Box \calK)_{v,w}$ and $\calD_v \otimes \calK \subset (\calJ \Box \calK)_{v,w}$, we conclude that $\calJ \to (\calJ \Box \calK)_{v,w}$ and $\calK \to (\calJ \Box \calK)_{v,w}$.
\end{proof}

\begin{theorem}
  Let $\calJ\subset M_n$ and $\calK\subset M_m$ be submatricial traceless self-adjoint operator spaces. Let $v \subset \C^n$ and $w \subset \C^m$ be bases. We have the inequality
  \begin{align*}
    \max\{\chi_{0,cb}(\calJ), \chi_{0,cb}(\calK)\} \leq \chi_{0,cb}((\calJ \Box \calK)_{v,w})\;.
  \end{align*}
\end{theorem}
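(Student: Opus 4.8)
The plan is to obtain this lower bound as an immediate consequence of the two preceding results: the graph homomorphisms produced in Lemma~\ref{lemma: Box homomorphism} together with the homomorphism-monotonicity of the completely bounded minimal chromatic number established in Theorem~\ref{theorem: chi_0 monotone}. In effect I would treat this statement as the ``easy half'' of Sabidussi's theorem: the inequality in the stated direction follows formally from functoriality, whereas the reverse inequality (producing an explicit colouring of the product from colourings of the factors) is the part that genuinely requires work.

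First I would invoke Lemma~\ref{lemma: Box homomorphism} to record the two graph homomorphisms
\[
  \calJ \to (\calJ \Box \calK)_{v,w} \quad \text{and} \quad \calK \to (\calJ \Box \calK)_{v,w}.
\]
These exist for arbitrary bases $v \subset \C^n$ and $w \subset \C^m$, which is exactly the generality in which the theorem is stated, so no particular choice of basis needs to be made here; the homomorphisms arise from the embeddings $\calJ \otimes \calD_w \subset (\calJ \Box \calK)_{v,w}$ and $\calD_v \otimes \calK \subset (\calJ \Box \calK)_{v,w}$ recorded in that lemma.

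Next I would apply Theorem~\ref{theorem: chi_0 monotone}. Each of the homomorphisms above comes with some finite number of Kraus operators, and the ``in particular'' clause of that theorem asserts precisely that a graph homomorphism $\calJ \to \calL$ forces $\chi_{0,cb}(\calJ) \leq \chi_{0,cb}(\calL)$. Taking $\calL = (\calJ \Box \calK)_{v,w}$ therefore yields both
\[
  \chi_{0,cb}(\calJ) \leq \chi_{0,cb}((\calJ \Box \calK)_{v,w}) \quad \text{and} \quad \chi_{0,cb}(\calK) \leq \chi_{0,cb}((\calJ \Box \calK)_{v,w}),
\]
and taking the maximum of the two left-hand sides gives the desired inequality.

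Since the argument is purely formal, I do not expect a real analytic obstacle; the only point to verify with care is that the hypotheses of Theorem~\ref{theorem: chi_0 monotone} are met verbatim, namely that Lemma~\ref{lemma: Box homomorphism} delivers honest graph homomorphisms in the sense of the defining cptp-map-with-Kraus-operators notion. This is clear because the maps there are implemented by isometric Kraus operators, hence are automatically completely positive and trace preserving. The conceptually substantive content has thus already been discharged upstream: it lives in the colouring-lifting construction inside Theorem~\ref{theorem: chi_0 monotone} and in exhibiting the two embedding homomorphisms in Lemma~\ref{lemma: Box homomorphism}.
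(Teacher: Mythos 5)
Your proposal is correct and follows exactly the paper's own argument: the paper likewise cites Lemma~\ref{lemma: Box homomorphism} for the two homomorphisms into $(\calJ \Box \calK)_{v,w}$ and then applies the monotonicity of $\chi_{0,cb}$ from Theorem~\ref{theorem: chi_0 monotone}. Nothing further is needed.
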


\begin{proof}
  By lemma~\ref{lemma: Box homomorphism} and by Theorem~\ref{theorem: chi_0 monotone}, we get the inequalities $\chi_{0,cb}(\calJ) \leq \chi_{0,cb}((\calJ \Box \calK)_{v,w})$ and $\chi_{0,cb}(\calK) \leq \chi_{0,cb}((\calJ \Box \calK)_{v,w})$.
\end{proof}

The reverse inequality seems to require the existence of orthogonal bases which colour our submatricial traceless self-adjoint operator spaces. The proof mimicks the proof of Sabidussi's Theorem in \cite{GodEtAl}.

\begin{theorem}
  Let $\calJ \subset M_n$ and $\calK \subset M_m$ be submatricial traceless self-adjoint operator spaces. Let $c = \max\{{\chi}_0(\calJ), \chi_0(\calK) \}$. Suppose that orthonormal bases $v \subset \C^n$ and $w \subset \C^m$ exist for which we have maps $f: [n] \to [c]$ and $g: [m] \to [c]$ for which whenever $f(i) = f(j)$, $v_{f(i)}v_{f(j)}^* \perp \calJ$ and whenever $g(l)= g(k)$, we have $w_{g(l)}w_{g(k)}^* \perp \calK$. We have the inequality
  \begin{align*}
    {\chi}_0((\calJ \Box \calK)_{v,w})\leq \max\{{\chi}_0(\calJ), \chi_0(\calK) \}\;.
  \end{align*}
\end{theorem}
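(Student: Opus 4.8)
The plan is to adapt the classical Sabidussi colouring. We are handed orthonormal bases $v,w$ together with $c$-colourings $f:[n]\to[c]$ and $g:[m]\to[c]$ realising the two bounds, where a colouring here means $v_iv_j^*\perp\calJ$ whenever $f(i)=f(j)$ and $w_kw_l^*\perp\calK$ whenever $g(k)=g(l)$. First I would identify the colour set $[c]$ with $\Z/c\Z$ and colour the product basis $\{v_i\otimes w_j : i\in[n],\,j\in[m]\}$ of $\C^n\otimes\C^m$ by $h(i,j)=f(i)+g(j)\bmod c$; this is again an orthonormal basis since $v$ and $w$ are. The goal is then to verify that $h$ is legal for $\chi_0$, namely that whenever two distinct pairs $(i,j),(i',j')$ satisfy $h(i,j)=h(i',j')$, the rank-one operator $(v_i\otimes w_j)(v_{i'}\otimes w_{j'})^*=(v_iv_{i'}^*)\otimes(w_jw_{j'}^*)$ is orthogonal to $(\calJ\Box\calK)_{v,w}=\calJ\otimes\calD_w+\calD_v\otimes\calK$. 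This at once yields $\chi_0((\calJ\Box\calK)_{v,w})\le c$.

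The heart of the argument is a short orthogonality computation that exploits orthonormality of $v$ and $w$ (this is precisely why the theorem assumes such bases exist). Testing against the spanning elements $A\otimes(w_kw_k^*)$ of $\calJ\otimes\calD_w$ with $A\in\calJ$, the inner product factors as
\begin{align*}
\Angle{(v_iv_{i'}^*)\otimes(w_jw_{j'}^*),\,A\otimes(w_kw_k^*)}=\Angle{v_iv_{i'}^*,A}\,\Angle{w_jw_{j'}^*,w_kw_k^*},
\end{align*}
and orthonormality of $w$ collapses the second factor to $\Angle{w_jw_{j'}^*,w_kw_k^*}=\delta_{jk}\delta_{j'k}$, which is nonzero only when $j=j'=k$, where it equals $\Angle{v_iv_{i'}^*,A}$. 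Hence orthogonality to $\calJ\otimes\calD_w$ reduces to: either $j\neq j'$, or else $j=j'$ and $v_iv_{i'}^*\perp\calJ$. A symmetric computation against $(v_kv_k^*)\otimes B$ with $B\in\calK$ shows that orthogonality to $\calD_v\otimes\calK$ reduces to: either $i\neq i'$, or else $i=i'$ and $w_jw_{j'}^*\perp\calK$.

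With these reductions in hand, I would finish with the Sabidussi case analysis on the distinct pair of indices. If $j=j'$, then $g(j)=g(j')$, so $h(i,j)=h(i',j')$ forces $f(i)=f(i')$ in $\Z/c\Z$; the colouring property of $f$ gives $v_iv_{i'}^*\perp\calJ$, settling the first reduction, while the second holds automatically since $(i,j)\neq(i',j')$ with $j=j'$ forces $i\neq i'$. The case $i=i'$ is symmetric, using the colouring property of $g$, and if $i\neq i'$ and $j\neq j'$ both reductions hold vacuously. Thus the orthogonality holds in every case. The only genuinely delicate point is the orthonormality hypothesis: it is exactly what reduces the cross terms $\Angle{w_jw_{j'}^*,w_kw_k^*}$ and $\Angle{v_iv_{i'}^*,v_kv_k^*}$ to Kronecker deltas, and without it these would not vanish, so the passage from the classical proof to the operator-space setting would break down — which is why one cannot expect the reverse inequality for an arbitrary (non-orthonormal) colouring basis.
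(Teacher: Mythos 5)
Your proposal is correct and follows essentially the same route as the paper: colour the product basis by $h(i,j)=f(i)+g(j)\bmod c$ and verify orthogonality to $\calJ\otimes\calD_w+\calD_v\otimes\calK$ by a case analysis that uses the colouring hypotheses when the relevant indices agree and orthonormality of $v$, $w$ to kill the cross terms when they differ. Your reorganization of the cases (by index equality rather than by colour congruence) and the explicit Kronecker-delta computation are just a more detailed write-up of the paper's argument.
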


\begin{proof}
  Let $c = \max\{\chi_{0}(\calJ), \chi_{0}(\calK) \}$. Suppose that $v,w $,$f$, and $g$ are as above. Define $h : [n] \times [m] \to [c] : (i,j) \mapsto f(i) + g(j) \mod c$. I claim that whenever $h(i,j) = h(k,l)$, that $(v_i \otimes w_j)(v_k \otimes w_l)^*$ is orthogonal to $(\calJ \Box \calK)_{v,w}$. The identity $h(i,j) = h(k,l)$ tell us $f(i) - f(k) \equiv g(j) - g(l) \mod c$. If $f(i)-f(k) \equiv 0 \mod c$ then we have nothing to check since this means that $f(i) = f(k)$ and $g(j) = g(l)$. Otherwise, $v_iv_k \perp v_sv_s^*$ for all $s$ and $w_jw_l^* \perp w_sw_s^*$ for all $s$. This guarantees that $v_iv_k^* \otimes w_jw_l^*$ is orthogonal to $(\calJ \Box \calK)_{v,w}$.
\end{proof}

\begin{remark}\label{remark: Sabidussi}
  The same proof as above will show us that for some orthonormal bases $v$ and $w$,
  \begin{align*}
    \chi((\calJ \Box \calK)_{v,w}) &\leq \max\{\chi(\calJ), \chi(\calK) \} \text{ and} \\
    \cl{\chi}((\calJ \Box \calK)_{v,w}^\perp) &\leq \max\{\cl{\chi}(\calJ^\perp), \cl{\chi}(\calK^\perp) \}\;.
  \end{align*}
\end{remark}

\begin{corollary}[Sabidussi's Theorem for submatricial traceless self-adjoint operator spaces]
  Suppose that $\calJ \subset M_n$ and $\calK \subset M_m$ are submatricial traceless self-adjoint operator spaces. There exist orthonormal bases $v \subset \C^n$ and $w \subset \C^m$ for which we have the inequalities
  \begin{align*}
    \max\{\chi_{0,cb}(\calJ), \chi_{0,cb}(\calK)\} \leq \chi_{0,cb}((\calJ \Box \calK)_{v,w}) \leq \cl{\chi}((\calJ \Box \calK)^\perp_{v,w}) \leq \max\{\cl{\chi}(\calJ^\perp), \cl{\chi}(\calK^\perp) \}\;.
  \end{align*}
\end{corollary}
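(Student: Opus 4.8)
The plan is to read the displayed chain as three separate inequalities and to match each with a result already in hand, taking care that a single pair of orthonormal bases $(v,w)$ is chosen so that all three hold at once. First I would unwind the abbreviation $\cl{\chi}(X) = \widehat{\chi}(X^\perp)$: applying it to the operator system $X = (\calJ \Box \calK)^\perp_{v,w}$ and using the double-complement identity gives $\cl{\chi}((\calJ \Box \calK)^\perp_{v,w}) = \widehat{\chi}((\calJ \Box \calK)_{v,w})$, while $\cl{\chi}(\calJ^\perp) = \widehat{\chi}(\calJ)$ and likewise for $\calK$. Thus the statement to be proved is
\[
\max\{\chi_{0,cb}(\calJ), \chi_{0,cb}(\calK)\} \leq \chi_{0,cb}((\calJ \Box \calK)_{v,w}) \leq \widehat{\chi}((\calJ \Box \calK)_{v,w}) \leq \max\{\widehat{\chi}(\calJ), \widehat{\chi}(\calK)\}.
\]

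Next I would fix the bases. When both $\widehat{\chi}(\calJ)$ and $\widehat{\chi}(\calK)$ are finite, I take $v$ and $w$ to be orthonormal bases realizing strong colourings of $\calJ$ and $\calK$ of minimal length; these are exactly the bases whose existence is asserted in Remark~\ref{remark: Sabidussi}, and that remark then delivers the rightmost inequality $\widehat{\chi}((\calJ \Box \calK)_{v,w}) \leq \max\{\widehat{\chi}(\calJ), \widehat{\chi}(\calK)\}$. If one of the two strong chromatic numbers is infinite, the right-hand side is $\infty$ and the rightmost inequality is vacuous, so any orthonormal $v$ and $w$ will serve. The leftmost inequality is the theorem of this subsection giving $\max\{\chi_{0,cb}(\calJ), \chi_{0,cb}(\calK)\} \leq \chi_{0,cb}((\calJ \Box \calK)_{v,w})$, which holds for an arbitrary choice of bases $v \subset \C^n$ and $w \subset \C^m$ and in particular for the $v, w$ just selected.

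It then remains only to supply the middle inequality, and the key observation is that it is an instance of the general comparison $\chi_{0,cb}(\calL) \leq \widehat{\chi}(\calL)$ valid for every submatricial traceless self-adjoint operator space $\calL$. Indeed, a strong $k$-colouring of $\calL$ is an orthonormal basis partitioned into $k$ strong independent sets, and this is in particular a $\chi_0$-colouring of $\calL$, since an orthonormal basis is a basis and the defining orthogonality $v_iv_j^* \perp \calL$ for indices in a common block is precisely the condition required by $\chi_0$. Hence $\chi_0(\calL) \leq \widehat{\chi}(\calL)$, and $\chi_{0,cb}(\calL) \leq \chi_0(\calL)$ follows by taking $d = 1$ in the infimum defining $\chi_{0,cb}$. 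Applying this with $\calL = (\calJ \Box \calK)_{v,w}$ closes the middle gap.

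The point demanding the most care is the bookkeeping that one choice of $(v,w)$ feeds all three inequalities, but I expect no genuine obstruction: the outer inequalities interact with the bases very differently. The leftmost holds for every basis and so imposes no constraint, while the rightmost is the only one that pins down $(v,w)$, and it does so exactly through the strong colourings furnished by Remark~\ref{remark: Sabidussi}; the middle inequality is basis-agnostic, being the general relation $\chi_{0,cb} \leq \widehat{\chi}$. Consequently the only content beyond citing the earlier theorem and remark is this elementary comparison together with the observation that the infinite-strong-chromatic-number case is disposed of by vacuity.
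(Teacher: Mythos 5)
Your argument is correct, and its overall architecture matches the paper's: the leftmost inequality comes from the preceding theorem on $\chi_{0,cb}$ of the Cartesian product (valid for arbitrary bases), the rightmost from Remark~\ref{remark: Sabidussi} (which is what pins down $v$ and $w$), and the only work is the middle link. Where you diverge is in how that middle link is supplied. The paper obtains $\chi_{0,cb}\bigl((\calJ \Box \calK)_{v,w}\bigr) \leq \widehat{\chi}\bigl((\calJ \Box \calK)_{v,w}\bigr)$ by passing through Stahlke's chromatic number, chaining Corollary~\ref{cor: chi_0 < chi_St} ($\chi_{0,cb} \leq \chi_{St}$) with Theorem~\ref{thm: clchi <chi_St} ($\chi_{St} \leq \widehat{\chi}$). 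You instead prove the direct comparison $\chi_{0,cb}(\calL) \leq \chi_0(\calL) \leq \widehat{\chi}(\calL)$: a strong colouring is an orthonormal basis partitioned so that $v_iv_j^* \perp \calL$ for all $i,j$ in a common block, which is literally a $\chi_0$-colouring, and $d=1$ in the infimum defining $\chi_{0,cb}$ gives the first step. This is more elementary and self-contained --- it bypasses the graph-homomorphism machinery and the Kraus-operator argument behind Theorem~\ref{thm: clchi <chi_St} entirely --- at the cost of not recording the intermediate bound by $\chi_{St}$, which the paper's route gives for free. Your handling of the bookkeeping (one choice of $(v,w)$ serving all three inequalities, with the infinite case disposed of by vacuity) is also sound and is, if anything, more explicit than the paper's one-line citation.
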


\begin{proof}
  By Remark~\ref{remark: Sabidussi} as well as Theorem~\ref{thm: clchi <chi_St} and Corollary~\ref{cor: chi_0 < chi_St}, we get the result.
\end{proof}

\subsection{Hedetniemi's inequality}

The inequality we wish to generalize in this section is a Theorem of Hedetniemi.

\begin{theorem}[Hedetniemi's inequality]
  Suppose that $G$ and $H$ are finite graphs. We have the inequality
  \begin{align*}
    \chi(G \times H) \leq \min\{\chi(G),\chi(H) \}
  \end{align*}
\end{theorem}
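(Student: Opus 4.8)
The plan is to exploit the two coordinate projections of the categorical product, which are graph homomorphisms, together with the elementary fact that a proper colouring pulls back along any graph homomorphism. In other words, the entire content is that $\chi$ is monotone under graph homomorphisms, and that the categorical product admits homomorphisms onto each of its factors.

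First I would verify that the coordinate projections $\pi_G : G \times H \to G$, $(v,a) \mapsto v$, and $\pi_H : G \times H \to H$, $(v,a) \mapsto a$, are graph homomorphisms. Indeed, if $(v,a) \sim (w,b)$ in $G \times H$, then by the definition of the categorical product we have \emph{simultaneously} $v \sim_G w$ and $a \sim_H b$; in particular $\pi_G(v,a) = v \sim_G w = \pi_G(w,b)$, so $\pi_G$ preserves adjacency, and symmetrically for $\pi_H$. (Note that it is precisely the conjunctive edge relation of $\times$ that makes both projections homomorphisms, in contrast to $\Box$.)

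Next I would pull back an optimal colouring along each projection. Suppose $\chi(G) = k$ and let $c : V(G) \to [k]$ be a proper $k$-colouring. Define $C : V(G \times H) \to [k]$ by $C(v,a) = c(\pi_G(v,a)) = c(v)$. If $(v,a) \sim (w,b)$, then $v \sim_G w$, whence $c(v) \neq c(w)$ and therefore $C(v,a) \neq C(w,b)$; thus $C$ is a proper colouring and $\chi(G \times H) \leq k = \chi(G)$. The symmetric argument using $\pi_H$ and an optimal colouring of $H$ yields $\chi(G \times H) \leq \chi(H)$, and taking the minimum of the two bounds gives the claim.

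I expect essentially no obstacle in this direction: the whole argument reduces to the observation that homomorphisms compose, so that precomposing a proper colouring $G \to K_k$ with the homomorphism $G \times H \to G$ again produces a proper colouring. The genuinely difficult statement in this circle of ideas is the reverse inequality — whether equality holds, i.e.\ Hedetniemi's conjecture — which is \emph{false} in general and is not what is asserted here; the upper bound above is the unconditional and elementary half.
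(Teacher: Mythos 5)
Your proof is correct and complete: the coordinate projections of the categorical product are graph homomorphisms precisely because the edge relation of $G \times H$ is conjunctive, and a proper colouring pulls back along any homomorphism, giving $\chi(G\times H)\leq \chi(G)$ and $\chi(G\times H)\leq \chi(H)$. The paper, however, does not prove this statement at the graph level at all: it states the classical theorem and then derives it as a special case of the non-commutative inequality $\chi_{0,cb}(\calJ\otimes\calK)\leq\min\{\chi_{0,cb}(\calJ),\chi_{0,cb}(\calK)\}$, using the identification $\calJ_{G\times H}=\calJ_G\otimes\calJ_H$, the fact that the partial trace maps are graph homomorphisms $\calJ\otimes\calK\to\calJ$ and $\calJ\otimes\calK\to\calK$, the monotonicity of $\chi_{0,cb}$ under graph homomorphisms (Theorem~\ref{theorem: chi_0 monotone}), and the identity $\chi_{0,cb}(\calJ_G)=\chi(G)$. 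Conceptually the two arguments are shadows of one another --- your coordinate projections are exactly the classical counterparts of the paper's partial traces, and ``colourings pull back'' is the classical counterpart of homomorphism-monotonicity of $\chi_{0,cb}$ --- but your route is self-contained and elementary, while the paper's route buys the stronger non-commutative statement of which the classical theorem is the special case $\calJ=\calJ_G$, $\calK=\calJ_H$. You are also right to flag that only the inequality, not the conjectured equality, is being asserted.
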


This Theorem follows as a special case of the analogous result for $\chi_{0,cb}$, first we generalize the categorical product.

\begin{proposition}
  Let $G$ and $H$ be finite graphs. We have the identity
  \begin{align*}
    \calJ_{G} \otimes \calJ_{H} = \calJ_{G \times H}\;.
  \end{align*}
\end{proposition}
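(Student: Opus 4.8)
The plan is to verify the identity directly at the level of spanning sets, using the canonical identification $M_n \otimes M_m \cong M_{nm}$ under which matrix units satisfy $E_{i,j} \otimes E_{k,l} = E_{(i,k),(j,l)}$, where $\C^n \otimes \C^m$ is identified with $\C^{nm}$ via $e_i \otimes e_k \mapsto e_{(i,k)}$. Both sides of the claimed equality are linear subspaces of $M_{nm}$ spanned by matrix units, so it suffices to check that exactly the same matrix units occur on each side.

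First I would observe that, since $\calJ_G = \Span\{E_{i,j}: i \sim_G j\}$ and $\calJ_H = \Span\{E_{k,l}: k \sim_H l\}$ are each spanned by matrix units, a basis for the tensor product $\calJ_G \otimes \calJ_H$ is given by the tensor products of these basis elements, namely $\{E_{i,j} \otimes E_{k,l}: i \sim_G j,\ k \sim_H l\}$. Applying the identification above, this set becomes $\{E_{(i,k),(j,l)}: i \sim_G j,\ k \sim_H l\}$, so that
\begin{align*}
\calJ_G \otimes \calJ_H = \Span\{E_{(i,k),(j,l)}: i \sim_G j \text{ and } k \sim_H l\}\;.
\end{align*}

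Next I would compare this with the definition of $\calJ_{G \times H}$. By the definition of the categorical product, $(i,k) \sim_{G \times H} (j,l)$ precisely when $i \sim_G j$ and $k \sim_H l$, whence $\calJ_{G \times H} = \Span\{E_{(i,k),(j,l)}: i \sim_G j \text{ and } k \sim_H l\}$, which is literally the right-hand side displayed above. This yields the claimed equality.

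I do not expect a genuine obstacle here: the whole content is the multi-index bookkeeping in the identification $E_{i,j} \otimes E_{k,l} = E_{(i,k),(j,l)}$ together with the elementary fact that tensoring two subspaces, each spanned by a collection of basis matrix units, produces the subspace spanned by the pairwise tensor products. The only point warranting a line of care is that these tensor products are linearly independent, so that they form a genuine basis rather than merely a spanning set; this is immediate since distinct matrix units of $M_{nm}$ are linearly independent. Self-adjointness and tracelessness are inherited automatically and need not be checked separately for the equality of subspaces.
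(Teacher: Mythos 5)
Your proof is correct and follows the same route as the paper's: both compute a spanning set of matrix units for $\calJ_G \otimes \calJ_H$ and observe it coincides with the defining spanning set of $\calJ_{G\times H}$ under the identification $E_{i,j}\otimes E_{k,l} = E_{(i,k),(j,l)}$. Your write-up just makes the multi-index bookkeeping explicit where the paper leaves it implicit.
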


\begin{proof}
  Observe that
  \begin{align*}
    \calJ_G \otimes \calJ_H &= \Span\{E_{i,j} \otimes E_{k,l}: i \sim_G j, k \sim_H l \} \\
    &= \calJ_{G \times H}\;.
  \end{align*}
\end{proof}

We now get a generalization of Hedetniemi's inequality to $\chi_{0,cb}$.
\begin{proposition}
  Suppose that $\calJ \subset M_n$ and $\calK \subset M_m$ are submatricial traceless self-adjoint operator spaces. We have the inequality
  \begin{align*}
    \chi_{0,cb}(\calJ \otimes \calK) \leq \min\{\chi_{0,cb}(\calJ), \chi_{0,cb}(\calK) \}\;.
  \end{align*}
\end{proposition}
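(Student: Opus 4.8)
The plan is to mirror the classical proof of Hedetniemi's inequality. Classically, $\chi(G \times H) \leq \min\{\chi(G), \chi(H)\}$ holds because the two coordinate projections $G \times H \to G$ and $G \times H \to H$ are graph homomorphisms and $\chi$ is monotone under graph homomorphisms. In the non-commutative setting $\chi_{0,cb}$ is monotone under graph homomorphisms by Theorem~\ref{theorem: chi_0 monotone}, so it suffices to produce graph homomorphisms $\calJ \otimes \calK \to \calJ$ and $\calJ \otimes \calK \to \calK$ that play the role of the two projections.

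For the first homomorphism I would take the partial trace that contracts the second tensor leg, $\tr_2 : M_n \otimes M_m \to M_n$, $X \otimes Y \mapsto \tr(Y)\,X$, which is completely positive and trace preserving with Kraus operators $E_k = I_n \otimes e_k^*$ indexed by an orthonormal basis $(e_k)$ of $\C^m$. The crucial computation is that on a simple tensor $A \otimes B$ with $A \in \calJ$ and $B \in \calK$ one has $E_i (A \otimes B) E_j^* = (e_i^* B e_j)\,A$, a scalar multiple of $A$ and hence an element of $\calJ$. Since $\calJ \otimes \calK$ is spanned by such simple tensors and $\calJ$ is a linear subspace, linearity upgrades this to $E_i (\calJ \otimes \calK) E_j^* \subset \calJ$ for every pair $(i,j)$, which is exactly the Kraus-operator condition for $\tr_2$ to be a graph homomorphism $\calJ \otimes \calK \to \calJ$. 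Running the symmetric argument with the partial trace over the first leg, with Kraus operators $e_k^* \otimes I_m$, yields a graph homomorphism $\calJ \otimes \calK \to \calK$.

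With both homomorphisms in hand, I would invoke Theorem~\ref{theorem: chi_0 monotone} twice to obtain $\chi_{0,cb}(\calJ \otimes \calK) \leq \chi_{0,cb}(\calJ)$ and $\chi_{0,cb}(\calJ \otimes \calK) \leq \chi_{0,cb}(\calK)$; taking the minimum of the two right-hand sides gives the stated inequality. This also recovers the classical theorem through the identity $\calJ_G \otimes \calJ_H = \calJ_{G \times H}$ established above.

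The one step that needs care is the Kraus-operator verification, and in particular that the inclusion $E_i (\calJ \otimes \calK) E_j^* \subset \calJ$ holds for all pairs $(i,j)$ and not only on the diagonal $i = j$. This is precisely where the specific form of the partial-trace Kraus operators does the work: each $E_i (\,\cdot\,) E_j^*$ contracts only the $\calK$-leg, producing a scalar, while leaving the $\calJ$-leg intact, so the output always lands in the linear span $\calJ$. Everything else — complete positivity and trace preservation of the partial trace, and the final passage to $\chi_{0,cb}$ — is routine once this inclusion is checked and Theorem~\ref{theorem: chi_0 monotone} is applied.
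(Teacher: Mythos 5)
Your proof is correct and follows essentially the same route as the paper: the paper's proof likewise observes that the partial trace maps give graph homomorphisms $\calJ \otimes \calK \to \calJ$ and $\calJ \otimes \calK \to \calK$ and then invokes Theorem~\ref{theorem: chi_0 monotone}. Your explicit verification of the Kraus-operator condition $E_i(\calJ \otimes \calK)E_j^* \subset \calJ$ for all pairs $(i,j)$ is a welcome elaboration of a step the paper leaves implicit.
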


\begin{proof}
  The partial trace maps produce graph homomorphisms $\calJ \otimes \calK \to \calK$ and $\calJ \otimes \calK \to \calJ$. By Theorem~\ref{theorem: chi_0 monotone}, we get the inequality.
\end{proof}

\begin{remark}
  A long standing conjecture of Hedetneimi asks whether we get the identity
  \begin{align*}
    \chi(G \times H) = \min\{\chi(G),\chi(H) \}
  \end{align*}
  for any finite graphs $G$ and $H$. As a more general problem, we can ask whether
  \begin{align*}
    \chi_{0,cb}(\calJ \otimes \calK) = \min\{\chi_{0,cb}(\calJ), \chi_{0,cb}(\calK) \}\;.
  \end{align*}
  holds for any submatricial traceless self-adjoint operator spaces $\calJ$ and $\calK$.
\end{remark}

\end{document}